\theoremstyle:=definition,remark,plain\do{%
        \expandafter\g@addto@macro\csname th@\theoremstyle\endcsname{%
            \addtolength\thm@preskip\parskip}%
        }
\theoremstyle{definition}
\newtheorem{theorem}{Theorem}
\newtheorem{proposition}[theorem]{Proposition}
\newtheorem{lemma}[theorem]{Lemma}
\newtheorem{corollary}[theorem]{Corollary}
\numberwithin{theorem}{section}
\numberwithin{question}{section}
\numberwithin{definition}{section}
\numberwithin{example}{section}
\numberwithin{remark}{section}
\newcommand*\patchAmsMathEnvironmentForLineno[1]{%
  \expandafter\let\csname old#1\expandafter\endcsname\csname #1\endcsname
  \expandafter\let\csname oldend#1\expandafter\endcsname\csname end#1\endcsname
  \renewenvironment{#1}%
     {\linenomath\csname old#1\endcsname}%
     {\csname oldend#1\endcsname\endlinenomath}}%
\newcommand*\patchBothAmsMathEnvironmentsForLineno[1]{%
  \patchAmsMathEnvironmentForLineno{#1}%
  \patchAmsMathEnvironmentForLineno{#1*}}%
\definecolor{todocolor}{RGB}{205,235,139}
\definecolor{todo-idea}{RGB}{120,180,255}
\definecolor{todo-error}{RGB}{208,31,60}
\definecolor{todo-question}{RGB}{255,255,136}
\pgfplotsset{width=10cm,compat=1.9}
\newcommand{\Av}{\mathcal{R}}
\renewenvironment{abstract}{
	\begin{list}{}%
	{\setlength{\rightmargin}{1in}%
	\setlength{\leftmargin}{1in}}%
	\item[]\ignorespaces\begin{small}}%
	{\end{small}\unskip\end{list}%
}
\title{The Insertion Encoding of Restricted Growth Functions}
\author{
	\begin{tabular}{c}
		\makecell{
			Christian Bean\\
			\small School of Computer Science\\
			\small and Mathematics\\
			\small Keele University\\
			\small Keele, United Kingdom\\
			\small \texttt{c.n.bean@keele.ac.uk}
		}
		\quad\quad
		\makecell{
			Paul C. Bell\\
			\small School of Computer Science\\
			\small and Mathematics\\
			\small Liverpool John Moores University\\
			\small Liverpool, United Kingdom\\
			\small \texttt{p.c.bell@ljmu.ac.uk}
		}\\ \\
		\makecell{
			Abigail Ollson\\
			\small School of Computer Science\\
			\small and Mathematics\\
			\small Keele University\\
			\small Keele, United Kingdom\\
			\small \texttt{a.n.ollson@keele.ac.uk}
		}
	\end{tabular}
}
\date{}
\begin{document}
\maketitle

\begin{abstract}
	We adapt the vertical and horizontal insertion encodings of Cayley permutations to enumerate restricted growth functions, which are in bijection with unordered set partitions. For both insertion encodings, we fully classify the classes for which these languages are regular. For the horizontal insertion encoding, we also prove that the conditions to be regular are the same for restricted growth functions of matchings.
\end{abstract}

\section{Introduction}
\label{sec:intro}

A \emph{set partition} of size $n$ is a partition of $[n] = \{ 1, 2, \dots, n\}$ into disjoint non-empty subsets called \emph{blocks} whose union is $[n]$. For example, for the set $[4] = \{1, 2, 3, 4 \}$ a set partition of size $4$ into 2 blocks is $\{1, 3 \}, \{2, 4\}$, or an example into 3 blocks is $\{3\}, \{2,4\}, \{1\}$.

There are two types of set partitions, \emph{ordered} and \emph{unordered}. In ordered set partitions, the order of the blocks matters so $\{3\}, \{2,4\}, \{1\}$ is different from $\{3\}, \{1\}, \{2,4\}$ or $\{2,4\}, \{1\}, \{3\}$. In unordered set partitions, the order of the blocks does not matter so $\{3\}, \{2,4\}, \{1\}$ is the same as $\{3\}, \{1\}, \{2,4\}$ or $\{2,4\}, \{1\}, \{3\}$. For unordered set partitions, we will write the blocks in increasing order of their smallest element, so $\{3\}, \{2,4\}, \{1\}$ is written as $\{1\}, \{2,4\}, \{3\}$.

Ordered set partitions are in bijection with \emph{Cayley permutations} which are words $\pi \in \mathbb{N}^*$ such that every value between 1 and the maximum value of $\pi$ appears at least once. The value $i$ in the $j^{th}$ block of an ordered set partition corresponds to the $i^{th}$ index of the Cayley permutation having value $j$. For example, the ordered set partition $\{1, 3\}, \{2, 4\}$ corresponds to the Cayley permutation $1 2 1 2$, the ordered set partition $\{3\}, \{2, 4\}, \{1\}$ corresponds to $3 2 1 2$ and $\{1\}, \{2,4\}, \{3\}$ corresponds to $1 2 3 2$.

The Cayley permutations which are in bijection with unordered set partitions have the extra constraint that the first occurrence of a value $n$ must occur after an occurrence of every value smaller than $n$. These are called \emph{restricted growth functions} (RGFs). For example, $1 2 3 2$ and $1 1 2 1 3 4$ are RGFs but $3 2 1 2$ and $1 2 4 3 2$ are not because the first occurrence of 3 in $3 2 1 2$ occurs before the first occurrence of 1 and 2 and the first occurrence of 4 in $1 2 4 3 2$ occurs before the first occurrence of 3. We define the \emph{size} of an RGF to be the number of letters in the sequence and the \emph{height} of an RGF to be the maximum value in the sequence so the size of $1 1 2 1 3 4$ is 6 and the height is 4.

The \emph{standardisation} of a word $\pi$ is the Cayley permutation obtained by replacing all occurrences of the smallest value in $\pi$ with 1, the next smallest value with 2, and so on. For example, the standardisation of the word $677649$ is the Cayley permutation $233214$.
We say that a Cayley permutation $\pi$ \emph{contains} an occurrence of a Cayley permutation $\sigma$ if there exists a subsequence of $\pi$ which standardises to $\sigma$. For example, $1 2 1 3 2 1 4$ contains the Cayley permutation $2213$ because the subsequence $2214$ standardises to $2213$. Otherwise, if there are no subsequences of $\pi$ which standardise to $\sigma$, we say that $\pi$ \emph{avoids} $\sigma$. For example, the Cayley permutation $1213214$ does not contain any occurrences of the Cayley permutation $4321$. In this context, we often refer to the Cayley permutations being contained or avoided as \emph{patterns}. Pattern avoidance is often easier to visualise on a plot where the plot of a Cayley permutation  $\pi_1 \pi_2 \cdots \pi_n$ is the set of points $(i, \pi_i)$. For example, the plot of the Cayley permutation $1 2 1 3 2 1 4$ is shown in Figure~\ref{fig:rgf} with an occurrence of $2213$ highlighted.
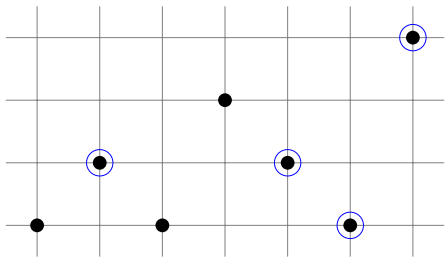
\begin{figure}[h]
  \centering
  \resizebox{6cm}{!}{
    \begin{tikzpicture}
      \draw[step=1cm,gray,thin] (0.5,0.5) grid (7.5,4.5);
      \foreach \x/\y in {1/1, 2/2, 3/1, 4/3, 5/2, 6/1, 7/4}
      \fill[black] (\x,\y) circle (0.11cm);
      \foreach \x/\y in {2/2, 5/2, 6/1, 7/4}
      \draw[blue] (\x,\y) circle (6pt);
    \end{tikzpicture}
  }
  \caption{The plot of the Cayley permutation $1 2 1 3 2 1 4$ with points from an occurrence of $2213$ circled.}
  \label{fig:rgf}
\end{figure}

The definitions of pattern avoidance and containment hold for RGFs as they are also Cayley permutations.
An RGF $\sigma$ avoids a set of Cayley permutations $\Pi$ if $\sigma$ avoids every Cayley permutation in $\Pi$, for example for the set $\Pi = \{121, 221\}$, the RGF $11233$ avoids $\Pi$ but $112133$ does not because it contains the pattern $121$ (even though it avoids $221$).

This notion of pattern avoidance on set partitions was first introduced by Klazar~\cite{Klazar1996,Klazar2000a,Klazar2000b}, where he refers to RGFs as \emph{canonical sequential form}. Other types of pattern avoidance include allowing reordering of the blocks~\cite{Sagan2006, Goyt2008, Goyt2009}; removing the partitions and employing pattern avoidance on the permutation that remains~\cite{Qiu2018}; and looking at pattern avoidance across blocks of a set partition where each part of a pattern must occur in a different block~\cite{Godbole2012,Chen2013,Kasraoui2013}. This last form of pattern avoidance corresponds to avoiding the inverse of the pattern in the equivalent Cayley permutation.

For a set of Cayley permutations $\Pi$, we define the \emph{RGF class} $\mathcal{R}(\Pi)$ to be the set of all RGFs avoiding the patterns in $\Pi$. In this context, we call $\Pi$ an \emph{avoiding set} of $\mathcal{R}(\Pi)$. An avoiding set of an RGF class is not unique, for example $\mathcal{R}(121) = \mathcal{R}(21)$. We describe an RGF class as \emph{finitely-based} if there exists a finite set of Cayley permutations $\Pi$ such that the class is $\mathcal{R}(\Pi)$. 

Sagan~\cite{Sagan2006} began the enumeration of RGFs as a method to enumerate unordered set partitions. Two RGF classes $\mathcal{R}(\Pi)$, $\mathcal{R}(\Pi')$ are \emph{Wilf-equivalent} if for all $n \in \mathbb{N}$, there are the same number of RGFs in $\mathcal{R}(\Pi)$ as in $\mathcal{R}(\Pi')$ of size $n$. Jelínek and Mansour~\cite{Jelínek2008} found all Wilf-equivalence classes of sets of RGFs avoiding one RGF of size up to seven and Mansour and Shattuck~\cite{Mansour2011a,Mansour2011b,Mansour2012} enumerated the RGF classes $\mathcal{R}(1222, 1212)$, $\mathcal{R}(1212, 12221)$ and $\mathcal{R}(1222, 12323)$. 
We enumerate RGF classes by adapting the vertical and horizontal insertion encodings of Cayley permutations~\cite{Bean2025}, which was first introduced by Albert, Linton and Ru\v{s}kuc to enumerate permutations~\cite{Albert2005}.
In this paper, we also give conditions for when the insertion encoding of an RGF class of either type forms a regular language.
To do this, we first introduce some key definitions and notation.

We define a size $n+m$ Cayley permutation $\pi$ to be a \emph{horizontal juxtaposition} of a Cayley permutation $\sigma$ of size $n$ and a Cayley permutation $\tau$ of size $m$ if the values at the smallest $n$ indices of $\pi$ standardise to $\sigma$ and the values at the largest $m$ indices of $\pi$ standardise to $\tau$. The Cayley permutation $1424 2534$ is a horizontal juxtaposition of the permutations $1323$ and $1423$ as $1424$ standardises to $1323$ and $2534$ standardises to $1423$. This is shown in Figure~\ref{fig: hor jux} with a dashed line between the two parts.

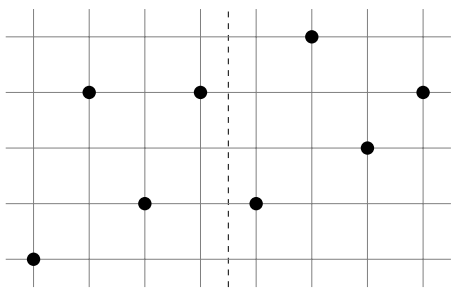
\begin{figure}[h]
  \centering
  \resizebox{6cm}{!}{%
    \begin{tikzpicture}
      \draw[dashed] (4.5, 0.5) -- (4.5, 5.5);
      \draw[step=1cm,gray,thin] (0.5,0.5) grid (8.5,5.5);
      \foreach \x/\y in {1/1, 2/4, 3/2, 4/4, 5/2, 6/5, 7/3, 8/4}
      \fill[black] (\x,\y) circle (0.12cm);
    \end{tikzpicture}
  }%
  \caption{$14242534$ is a horizontal juxtaposition of the Cayley permutations $1323$ and $1423$.}
  \label{fig: hor jux}
\end{figure}

There are four classes of Cayley permutations of particular interest which are the horizontal juxtapositions of increasing or decreasing sequences. These types will be denoted $\mathcal{H}_{A,B}$ where $A, B \in \{I, D \}$ and where $A$ denotes the type of sequence of the smallest values and $B$ denotes the type of sequence of the largest values. 

We also define a Cayley permutation $\pi$ with maximum value $n+m$ to be a \emph{vertical juxtaposition} of a Cayley permutation $\sigma$ with maximum value $n$ and a Cayley permutation $\tau$ with maximum value $m$ if the smallest $n$ values of $\pi$ standardise to $\sigma$ and the largest $m$ values of $\pi$ standardise to $\tau$. For example, the Cayley permutation $51521443$ is a vertical juxtaposition of the Cayley permutations $1213$ and $2211$ as $5544$ standardises to $221$. This is shown in Figure~\ref{fig: vert jux} with a dashed line between the largest and smallest values.

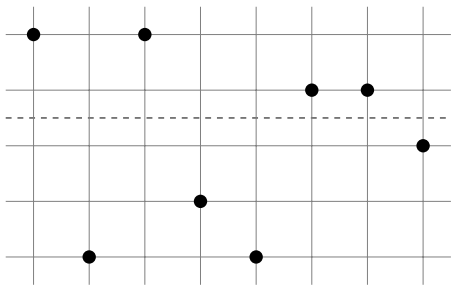
\begin{figure}[h]
  \centering
  \resizebox{6cm}{!}{%
    \begin{tikzpicture}
      \draw[dashed] (0.5, 3.5) -- (8.5, 3.5);
      \draw[step=1cm,gray,thin] (0.5,0.5) grid (8.5,5.5);
      \foreach \x/\y in {1/5, 2/1, 3/5, 4/2, 5/1, 6/4, 7/4, 8/3}
      \fill[black] (\x,\y) circle (0.12cm);
    \end{tikzpicture}
  }%
  \caption{$51521443$ is a vertical juxtaposition of the Cayley permutations $1213$ and $2211$.}
  \label{fig: vert jux}
\end{figure}

There are nine classes of Cayley permutations of particular interest which are the vertical juxtapositions of increasing, decreasing or constant sequences, denoted $I$, $D$ and $C$ respectively. These types will be denoted $\mathcal{V}_{A,B}$ where $A, B \in \{I, D, C \}$, $A$ denotes the type of sequence of the smallest values and $B$ denotes the type of sequence of the largest values. 

The \emph{horizontal concatenation} of two size $n$ Cayley permutations $\sigma$ and $\tau$ is the size $2n$ Cayley permutation of the form $\sigma \tau$.
There is exactly one horizontal concatenation of each even size in each of $\mathcal{H}_{I, I}$ and $\mathcal{H}_{I, D}$, for example the Cayley permutation $12344321$ is the horizontal concatenation of size 8 in $\mathcal{H}_{I, D}$.
A \emph{vertical alternation} of size $2n$ is the vertical juxtaposition of size $2n$ of the form $a_1b_1a_2b_2\cdots a_nb_n$ where $b_j > a_i$ for all $1 \leq j \leq n$ and $1 \leq i \leq n$. There is exactly one vertical alternation of each even size for each type of vertical juxtapositions, for example in $\mathcal{V}_{C, I}$ the size $6$ vertical alternation is $121314$.

These types of vertical and horizontal juxtapositions will be used to determine when the insertion encodings of an RGF class are regular.
In Section~\ref{sec:horizontal}, we adapt the horizontal insertion encoding to enumerate RGFs which can also be used to enumerate matchings and in Section~\ref{sec:vertical} we adapt the vertical insertion encoding.

\section{Horizontal insertion encoding}
\label{sec:horizontal}

A Cayley permutation can be generated by inserting values from left to right. We call this the \emph{(horizontal) evolution}, for example the evolution of the Cayley permutation 242143 is shown below where the final Cayley permutation can be read off from left to right in the final state.

\begin{center}
  \resizebox{\linewidth}{!}{%
    \begin{tikzpicture}
        \node at (0,0) {$\diamond$};
        
        \node at (2em,0) {$\rightarrow$};
        
        \node at (4em, 0) {$1$};
        \node at (5em, 1em) {$\diamond$};
        \node at (5em, -1em) {$\diamond$};
        \node at (5em, 0em) {$\overline{\diamond}$};
        
        \node at (7em, 0) {$\rightarrow$};
        
        \node at (9em, 0) {$1$};
        \node at (9.6em, 2em) {$2$};
        \node at (10.6em, -1em) {$\diamond$};
        \node at (10.6em, 1em) {$\diamond$};
        \node at (10.6em, 2em) {$\overline{\diamond}$};
        \node at (10.6em, 0em) {$\overline{\diamond}$};
        
        \node at (12.6em, 0) {$\rightarrow$};
        
        \node at (14.6em, 0) {$1$};
        \node at (15.2em, 2em) {$2$};
        \node at (15.8em, 0em) {$1$};
        \node at (16.8em, -1em) {$\diamond$};
        \node at (16.8em, 1em) {$\diamond$};
        \node at (16.8em, 2em) {$\overline{\diamond}$};
        
        \node at (18.8em, 0) {$\rightarrow$};
        
        \node at (20.8em, 0) {$2$};
        \node at (21.4em, 2em) {$3$};
        \node at (22em, 0em) {$2$};
        \node at (22.6em, -1em) {$1$};
        \node at (23.6em, 1em) {$\diamond$};
        \node at (23.6em, 2em) {$\overline{\diamond}$};
        
        \node at (25.6em, 0) {$\rightarrow$};
        
        \node at (27.6em, 0) {$2$};
        \node at (28.2em, 2em) {$3$};
        \node at (28.8em, 0em) {$2$};
        \node at (29.4em, -1em) {$1$};
        \node at (30em, 2em) {$3$};
        \node at (31em, 1em) {$\diamond$};
        
        \node at (33em, 0) {$\rightarrow$};
        
        \node at (35em, 0) {$2$};
        \node at (35.6em, 2em) {$4$};
        \node at (36.2em, 0em) {$2$};
        \node at (36.8em, -1em) {$1$};
        \node at (37.4em, 2em) {$4$};
        \node at (38em, 1em) {$3$};
        
    \end{tikzpicture}
  }%
\end{center}

The intermediate states in an evolution are called \emph{configurations}; these are a mixture of the standardisation of the leftmost points in the Cayley permutation and some slots, represented by $\diamond$ or $\overline{\diamond}$. Each slot represents the promise of a value to be inserted. For a Cayley permutation $\pi$, the value inserted in $\diamond$ will be the leftmost occurrence of this value in $\pi$ whereas for $\overline{\diamond}$ the value inserted will be a repeat of the value at that height. We call $\overline{\diamond}$ a \emph{repeating} slot and $\diamond$ a \emph{new} slot. An evolution always begins with a single new slot and ends with a completed Cayley permutation.

There are four different ways of inserting into a slot denoted $u$, $m$, $d$ and $f$ describing where the point has been placed in relation to new slots that have been added. If the point is \emph{up} from a new slot then the insertion is described by $u$, if it is in the \emph{middle} of two new slots then it is $m$, if it is \emph{down} from a new slot then it is $d$ and if the previous slot was \emph{filled} with no new slots added then it is $f$.

Each insertion into a configuration can be described by the index of the slot which is inserted into, if there will be any more occurrences of the value inserted later in the Cayley permutation, and what the slot will be replaced with.
This is encoded by letters of the form $a_{i, j}$ where $i \in \mathbb{N}$ is the index of the slot, starting with the bottommost slot being slot 1. The index $j \in \{0, 1\}$ is $0$ if there will be no more occurrences of the value being inserted and is $1$ if there will be. The letter $a \in \{u, m, d, f\}$ describes the type of insertion, as defined in Table~\ref{tab:horizontal-insertions} which shows all possible insertions of the value $n$ into a slot at index $i$.
To ensure that the evolutions are unique, only $f$ can be inserted into a repeating slot. 

\begin{table}[h]
    \centering
    \begin{tabular}{|ccccc|ccccc|}
        \hline
        $u_{i, 0}$: & $\diamond$            & $\rightarrow$ & $n$ &            & $u_{i, 1}$: & $\diamond$            & $\rightarrow$ & $n$ & $\overline{\diamond}$ \\
                    &                       &               &     & $\diamond$ &             &                       &               &     & $\diamond$            \\
        \hline
                    &                       &               &     & $\diamond$ &             &                       &               &     & $\diamond$            \\
        $m_{i, 0}$: & $\diamond$            & $\rightarrow$ & $n$ &            & $m_{i, 1}:$ & $\diamond$            & $\rightarrow$ & $n$ & $\overline{\diamond}$ \\
                    &                       &               &     & $\diamond$ &             &                       &               &     & $\diamond$            \\
        \hline
                    &                       &               &     & $\diamond$ &             &                       &               &     & $\diamond$            \\
        $d_{i, 0}$: & $\diamond$            & $\rightarrow$ & $n$ &            & $d_{i, 1}$: & $\diamond$            & $\rightarrow$ & $n$ & $\overline{\diamond}$ \\
        \hline
        $f_{i, 0}$: & $\diamond$            & $\rightarrow$ & $n$ &            & $f_{i, 1}$: & $\diamond$            & $\rightarrow$ & $n$ & $\overline{\diamond}$ \\
        \hline
        $f_{i, 0}$: & $\overline{\diamond}$ & $\rightarrow$ & $n$ &            & $f_{i, 1}$: & $\overline{\diamond}$ & $\rightarrow$ & $n$ & $\overline{\diamond}$ \\
        \hline
    \end{tabular}
    \caption{Types of insertions of the value $n$ into a slot at index $i$. Only $f_{i, 0}$ and $f_{i, 1}$ can insert into $\overline{\diamond}$.}
    \label{tab:horizontal-insertions}
\end{table}

Using this notation, the horizontal insertion encoding for the Cayley permutation $242143$ is the word $m_{1,1}u_{3,1}f_{2,0}f_{1,0}f_{2,0}f_{1,0}$, the evolution for which is shown below.

\begin{center}
  \resizebox{\linewidth}{!}{%
    \begin{tikzpicture}
        \node at (0,0) {$\diamond$};
        
        \node at (2em,0) {$\rightarrow$};
        \node at (2em,1em) {$m_{1, 1}$};
        
        \node at (4em, 0) {$1$};
        \node at (5em, 1em) {$\diamond$};
        \node at (5em, -1em) {$\diamond$};
        \node at (5em, 0em) {$\overline{\diamond}$};
        
        \node at (7em, 0) {$\rightarrow$};
        \node at (7em,1em) {$u_{3, 1}$};
        
        \node at (9em, 0) {$1$};
        \node at (9.6em, 2em) {$2$};
        \node at (10.6em, -1em) {$\diamond$};
        \node at (10.6em, 1em) {$\diamond$};
        \node at (10.6em, 2em) {$\overline{\diamond}$};
        \node at (10.6em, 0em) {$\overline{\diamond}$};
        
        \node at (12.6em, 0) {$\rightarrow$};
        \node at (12.6em,1em) {$f_{2, 0}$};
        
        \node at (14.6em, 0) {$1$};
        \node at (15.2em, 2em) {$2$};
        \node at (15.8em, 0em) {$1$};
        \node at (16.8em, -1em) {$\diamond$};
        \node at (16.8em, 1em) {$\diamond$};
        \node at (16.8em, 2em) {$\overline{\diamond}$};
        
        \node at (18.8em, 0) {$\rightarrow$};
        \node at (18.8em,1em) {$f_{1,0}$};
        
        \node at (20.8em, 0) {$2$};
        \node at (21.4em, 2em) {$3$};
        \node at (22em, 0em) {$2$};
        \node at (22.6em, -1em) {$1$};
        \node at (23.6em, 1em) {$\diamond$};
        \node at (23.6em, 2em) {$\overline{\diamond}$};
        
        \node at (25.6em, 0) {$\rightarrow$};
        \node at (25.6em,1em) {$f_{2, 0}$};
        
        \node at (27.6em, 0) {$2$};
        \node at (28.2em, 2em) {$3$};
        \node at (28.8em, 0em) {$2$};
        \node at (29.4em, -1em) {$1$};
        \node at (30em, 2em) {$3$};
        \node at (31em, 1em) {$\diamond$};
        
        \node at (33em, 0) {$\rightarrow$};
        \node at (33em,1em) {$f_{1, 0}$};
        
        \node at (35em, 0) {$2$};
        \node at (35.6em, 2em) {$4$};
        \node at (36.2em, 0em) {$2$};
        \node at (36.8em, -1em) {$1$};
        \node at (37.4em, 2em) {$4$};
        \node at (38em, 1em) {$3$};
        
    \end{tikzpicture}
  }%
\end{center}

For a Cayley permutation $\pi$, the word corresponding to the evolution of $\pi$ is called the horizontal insertion encoding of $\pi$. The horizontal insertion encoding of a set of Cayley permutations $S$ is the language consisting of the horizontal insertion encodings of the Cayley permutations in $S$.

For the horizontal insertion encoding of RGFs, there can never be a new slot below a point as a new slot implies a value smaller than the point being placed has not occurred yet. So, the only letters possible are $d$ and $f$. The horizontal insertion encoding of RGFs is the subset of the horizontal insertion encoding of Cayley permutations that only uses the letters shown in Table~\ref{tab: canon horizontal insertion encoding letters}, proven in Proposition~\ref{prop: horizontal RGF letters}.

\begin{table}[h]
    \centering
    \begin{tabular}{|cccl|cccl|}
        \hline
                     &                                      &     & $\diamond$ &              &                                      &     & $\diamond$            \\
        $d_{i, 0}: $ & $\diamond \: \rightarrow$            & $n$ &            & $d_{i, 1}: $ & $\diamond \: \rightarrow$            & $n$ & $\overline{\diamond}$ \\
        \hline
        $f_{i, 0}: $ & $\diamond \: \rightarrow$            & $n$ &            & $f_{i, 1}: $ & $\diamond \: \rightarrow$            & $n$ & $\overline{\diamond}$ \\
        \hline
        $f_{i, 0}: $ & $\overline{\diamond} \: \rightarrow$ & $n$ &            & $f_{i, 1}: $ & $\overline{\diamond} \: \rightarrow$ & $n$ & $\overline{\diamond}$ \\
        \hline
    \end{tabular}
    \caption{Types of insertions of the value $n$ into a slot at index $i$ to create RGFs.}
    \label{tab: canon horizontal insertion encoding letters}
\end{table}

\begin{proposition}
    \label{prop: horizontal RGF letters}
    A Cayley permutation is an RGF if and only if its horizontal insertion encoding only uses the letters in Table~\ref{tab: canon horizontal insertion encoding letters}.
\end{proposition}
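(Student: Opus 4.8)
The plan is to prove both directions through the combinatorial reformulation that a Cayley permutation is an RGF if and only if every \emph{first occurrence} (the leftmost occurrence of a value) is a strict left-to-right maximum; equivalently, the distinct values appear, reading left to right, in the increasing order $1, 2, 3, \dots$. First I would record the two mechanical facts I need about the encoding: (i) the insertions of an evolution fill positions from left to right, so that after $k$ steps the placed points are exactly the $k$ leftmost points of $\pi$ and every subsequent point is inserted strictly to their right; and (ii) by Table~\ref{tab:horizontal-insertions} a point is placed into a new slot $\diamond$ exactly when it is a first occurrence, a repeating slot $\overline{\diamond}$ only ever receives a later (non-first) occurrence, and the insertion type records the new slots created relative to the placed point: $u$ and $m$ create a new slot strictly below the point, $d$ creates one strictly above it, and $f$ creates none.

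For the forward direction I would argue the contrapositive: if the encoding uses a letter $u_{i,j}$ or $m_{i,j}$, then $\pi$ is not an RGF. Such an insertion places a first occurrence of some value $v$ and, by fact (ii), creates a new slot $\diamond$ strictly below it. Since relative heights are preserved by later insertions, whatever eventually fills this slot has value strictly smaller than $v$; since it is a $\diamond$ slot it is filled by a first occurrence, and by fact (i) this occurs at a later step, hence at a position to the right of $v$. Thus $\pi$ has a first occurrence of a value $w < v$ lying to the right of the first occurrence of $v$, so the first occurrence of $v$ is not preceded by an occurrence of $w$, and $\pi$ fails the RGF condition.

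For the reverse direction, assuming the encoding uses only the letters of Table~\ref{tab: canon horizontal insertion encoding letters}, I would prove by induction on the evolution the invariant that \emph{every configuration contains at most one new slot $\diamond$, and if present it lies above all placed points}. The base case is the initial single slot. For the inductive step, $d$ inserts into the (unique, top) $\diamond$ and replaces it by the placed point together with one new $\diamond$ above it; $f$ into a $\diamond$ removes it and creates no new slot; and $f$ into a $\overline{\diamond}$ leaves the new slot untouched — in every case the invariant is preserved, and since only $d$ creates a $\diamond$ while $d$ can act only on a $\diamond$, the count never rises above one. Granting the invariant, each first occurrence is inserted into a $\diamond$ lying above all currently placed points, which by fact (i) are precisely the points to its left; hence every first occurrence is a strict left-to-right maximum, and $\pi$ is an RGF.

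The main obstacle is the positional bookkeeping underlying facts (i) and (ii): one must verify that the ``below'' relation among slots at the moment of insertion really does translate into a strictly smaller value appearing strictly to the right in the completed permutation, and that ``above all placed points'' is the correct reading of ``new left-to-right maximum'' given that the placed points always form the leftmost prefix. Once these spatial facts are pinned down from the definition of the evolution and Table~\ref{tab:horizontal-insertions}, both inductions are routine.
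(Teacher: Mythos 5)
Your proposal is correct and takes essentially the same route as the paper: the forward direction is the same contrapositive argument (a $u$ or $m$ letter creates a new slot below a placed first occurrence, which must later be filled by a leftmost occurrence of a smaller value at a larger index), and your reverse direction simply makes explicit, via the invariant that the unique new slot $\diamond$ always sits above all placed points, what the paper asserts tersely as ``every time a new value is inserted there are no slots below it.'' The extra bookkeeping (facts (i)--(ii) and the induction) is a faithful elaboration of the paper's argument rather than a different method.
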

\begin{proof}
    First note that for a Cayley permutation $\pi$ to be an RGF it is both necessary and sufficient that for every value $n$ in $\pi$ and every value $k<n$, the leftmost occurrence of $k$ is at a smaller index than every occurrence of $n$ in $\pi$.
    
    Let $\pi$ be a Cayley permutation whose horizontal insertion encoding uses a letter not in Table~\ref{tab: canon horizontal insertion encoding letters}. Then there is a letter of the form $u_{i,j}$ or $m_{i,j}$ for some $i$ and $j$ in the horizontal insertion encoding of $\pi$. Suppose such a letter was inserted at index $k$ in $\pi$. For either letter, a new slot is created below the value inserted. At least one value is inserted into this new slot to create $\pi$. The first value inserted into this new slot will be a leftmost occurrence of a value in $\pi$ which is less than $\pi(k)$ but at an index larger than $k$ in $\pi$. Therefore, $\pi$ is not an RGF.
    
    Now, let $\pi$ be a Cayley permutation whose horizontal insertion encoding only uses the letters in Table~\ref{tab: canon horizontal insertion encoding letters}. Then every time a new value is inserted there are no slots below it, so there can be no occurrence of a value smaller than it which has a leftmost occurrence at a larger index. Therefore, $\pi$ is an RGF.
\end{proof}

The horizontal insertion encoding for the RGF $121331$ is given by the word $d_{1,1}d_{2,0}f_{1,1}f_{2,1}f_{2,0}f_{1,0}$, the evolution for which is shown below.

\begin{center}
  \resizebox{\linewidth}{!}{%
    \begin{tikzpicture}
        \node at (0,0) {$\diamond$};
        
        \node at (2em,0) {$\rightarrow$};
        \node at (2em,1em) {$d_{1, 1}$};
        
        \node at (4em, 0) {$1$};
        \node at (5em, 1em) {$\diamond$};
        \node at (5em, 0) {$\overline{\diamond}$};
        
        \node at (7em, 0) {$\rightarrow$};
        \node at (7em,1em) {$d_{2, 0}$};
        
        \node at (9em, 0) {$1$};
        \node at (9.6em, 1em) {$2$};
        \node at (10.6em, 2em) {$\diamond$};
        \node at (10.6em, 0) {$\overline{\diamond}$};
        
        \node at (12.6em, 0) {$\rightarrow$};
        \node at (12.6em,1em) {$f_{1,1}$};
        
        \node at (14.6em, 0) {$1$};
        \node at (15.2em, 1em) {$2$};
        \node at (15.8em, 0em) {$1$};
        \node at (16.8em, 2em) {$\diamond$};
        \node at (16.8em, 0) {$\overline{\diamond}$};
        
        \node at (18.8em, 0) {$\rightarrow$};
        \node at (18.8em,1em) {$f_{2, 1}$};
        
        \node at (20.8em, 0) {$1$};
        \node at (21.4em, 1em) {$2$};
        \node at (22em, 0em) {$1$};
        \node at (22.6em, 2em) {$3$};
        \node at (23.6em, 2em) {$\overline{\diamond}$};
        \node at (23.6em, 0) {$\overline{\diamond}$};
        
        \node at (25.6em, 0) {$\rightarrow$};
        \node at (25.6em,1em) {$f_{2, 0}$};
        
        \node at (27.6em, 0) {$1$};
        \node at (28.2em, 1em) {$2$};
        \node at (28.8em, 0em) {$1$};
        \node at (29.4em, 2em) {$3$};
        \node at (30em, 2em) {$3$};
        \node at (31em, 0) {$\overline{\diamond}$};
        
        \node at (33em, 0) {$\rightarrow$};
        \node at (33em,1em) {$f_{1, 0}$};
        
        \node at (35em, 0) {$1$};
        \node at (35.6em, 1em) {$2$};
        \node at (36.2em, 0em) {$1$};
        \node at (36.8em, 2em) {$3$};
        \node at (37.4em, 2em) {$3$};
        \node at (38em, 0em) {$1$};
    \end{tikzpicture}
  }%
\end{center}

As in~\cite[Section 4]{Bean2025}, for an RGF class to have a regular (horizontal) insertion encoding there must be a bound on the number of slots in any configuration in the evolution of any RGF in the class, a condition called being \emph{slot-bounded}.
An algorithm presented in~\cite[Section 5]{Bean2025} can be used to enumerate finitely-based Cayley permutation classes with a regular insertion encoding. As a subset of the letters are used for RGFs, this algorithm can be adapted to enumerate finitely-based RGF classes, an implementation for which can be found on GitHub~\cite{Bean_cperms_ins_enc_2025}. 
This shows that being slot-bounded is also a sufficient condition for an RGF class to have a regular (horizontal) insertion encoding. We will now give a characterisation of when an RGF class is slot-bounded.

Let $\mathcal{SB}_H(k)$ be the set of all RGFs with evolutions that contain configurations with at most $k$ slots.
For an RGF $\pi$, if a configuration in the evolution of $\pi$ has $k+1$ slots then the topmost slot could be a new slot or a repeating slot, but all other slots are repeating slots as otherwise there would be a new slot below a placed point, contradicting that $\pi$ is an RGF.

For a configuration $c$, we define the \emph{derivations} of $c$ as the set of all Cayley permutations which have $c$ in its evolution. 
As the only letter which increases the number of slots is $d_{i,1}$, for a configuration $c$ to have $k+1$ slots it must be described by the word consisting of at least $k$ many $d_{i,1}$. These insertions create an occurrence of a strictly increasing sequence of size $k$ in every derivation of $c$.
Each of the slots in $c$ must eventually be filled with at least one $f_{i, 0}$. As there are $k+1$ many slots, these $k+1$ many $f_{i, 0}$ form an occurrence of a permutation of size $k+1$ in any derivation of $c$.

The set of Cayley permutations with horizontal insertion encoding $d_{1,1}d_{2,1}\ldots d_{k,1}$ followed by $k+1$ many $f_{i, 0}$ are the minimal Cayley permutations which have a configuration with $k+1$ slots in their evolution.
These Cayley permutations are of size $2k+1$ and height $k+1$ where the first $k$ indices form a strictly increasing sequence of the values 1 to $k$ and the last $k+1$ indices are some permutation of the values 1 to $k+1$. Proposition \ref{prop: horizontal canonical SB(k) basis} proves that an avoiding set of $\mathcal{SB}_H(k)$ can be written as all Cayley permutations of this form.

\begin{proposition}
    \label{prop: horizontal canonical SB(k) basis}
    For each positive integer $k$, the set $\mathcal{SB}_H(k)$ is the set of RGFs that avoid all size $2k+1$ and height $k+1$ Cayley permutations where the first $k$ indices form a strictly increasing sequence of the values 1 to $k$ and the last $k+1$ indices are some permutation of the values 1 to $k+1$. 
\end{proposition}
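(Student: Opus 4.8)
The plan is to prove the equivalent statement that, for an RGF $\pi$, the evolution of $\pi$ contains a configuration with at least $k+1$ slots if and only if $\pi$ contains one of the patterns described in the statement; write $\mathcal{B}_k$ for that set of patterns. Since $\SB(k)$ is by definition the set of RGFs all of whose configurations have at most $k$ slots, this equivalence is exactly the claim $\SB(k) = \R(\mathcal{B}_k)$. Throughout I would lean on three facts about the horizontal insertion encoding: each letter places exactly one point, and the points are placed from left to right, so after $t$ letters the configuration records the leftmost $t$ points of $\pi$ as a prefix while every slot corresponds to a point of larger index; a recurring value keeps a repeating slot $\overline{\diamond}$ at its height from the configuration just after its first occurrence until the configuration that fills its last occurrence (forced since only $f$ may enter $\overline{\diamond}$, so no $\diamond$ can be manufactured from repeating slots); and, by the remark preceding the statement, in any RGF configuration every slot except possibly the topmost is a repeating slot.

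For the forward direction, suppose a configuration $c$ reached after $t$ letters has $m \ge k+1$ slots at heights $h_1 < \cdots < h_m$. The bottom $k$ of these are repeating slots, so for each $i \le k$ the value at height $h_i$ has already occurred; let $p_i$ be its leftmost occurrence. Because $\pi$ is an RGF, the leftmost occurrences of distinct values appear in increasing order of value, so $p_1, \dots, p_k$ occur in increasing index order at the increasing heights $h_1 < \cdots < h_k$ and standardise to $1 2 \cdots k$. Now let $q_i$ be the first point inserted into the $i$-th bottom slot after $c$, for $i = 1, \dots, k+1$; each $q_i$ is placed at the height $h_i$ of its slot (whether repeating or the possibly-new topmost one) and has index exceeding $t$, so the $q_i$ lie to the right of every $p_j$ and occupy $k+1$ distinct heights. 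The $2k+1$ points $p_1, \dots, p_k, q_1, \dots, q_{k+1}$ thus use exactly the heights $h_1 < \cdots < h_{k+1}$; standardising sends $h_i \mapsto i$, turning the prefix $p_1 \cdots p_k$ into the increasing run $1 \cdots k$ and the suffix $q_1 \cdots q_{k+1}$ into a permutation of $1, \dots, k+1$. This is an occurrence of a pattern in $\mathcal{B}_k$.

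For the reverse direction, suppose $\pi$ contains $\beta \in \mathcal{B}_k$, realised by run points at indices $i_1 < \cdots < i_k$ and permutation points at indices $j_1 < \cdots < j_{k+1}$ with $i_k < j_1$. The decisive point is that standardisation collapses two selected points to the same value precisely when they share a height in $\pi$: since in $\beta$ each value $i \le k$ occurs both in the run and in the permutation block, the run point and the permutation point of standardised value $i$ must sit at a common height $e_i$ of $\pi$, with $e_1 < \cdots < e_k$. Hence the value at each $e_i$ genuinely recurs, with occurrences at index $i_i \le i_k < j_1$ and at index $\ge j_1$. At the configuration $c$ reached after the first $j_1 - 1$ letters, every $e_i$ lies strictly between the first and last occurrence of its value, so $c$ carries a repeating slot at each of the $k$ distinct heights $e_1 < \cdots < e_k$. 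Finally the permutation point of standardised value $k+1$ sits at a height $e_{k+1} > e_k$ and is still unplaced at $c$: either its value already occurred in the prefix, giving a further repeating slot above $e_k$, or its first occurrence is still to come, which forces $c$ to contain a new slot $\diamond$ (otherwise no new value could ever be placed after $c$), and by the remark that slot is topmost, hence also above $e_k$. Either way $c$ has at least $k+1$ slots.

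I expect the main obstacle to be the bookkeeping of the reverse direction: a pattern occurrence is purely a statement about relative order, and the crux is recognising that equal standardised values in $\beta$ force equal absolute heights in $\pi$, which is exactly what turns the $k$ shared run/permutation values into $k$ simultaneous repeating slots straddling the single cut after index $j_1 - 1$, with the value $k+1$ supplying the extra slot above. The forward direction is comparatively routine once the prefix/suffix structure and the persistence of repeating slots are in place; there the only care needed is to take leftmost occurrences, so that the RGF property makes $p_1 \cdots p_k$ increasing in index and not merely increasing in height.
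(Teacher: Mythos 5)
Your proof is correct, and one half of it is genuinely stronger than what the paper does. Your forward direction (a configuration with $k+1$ slots yields an occurrence of one of the stated patterns) is essentially identical to the paper's argument: the paper likewise takes the leftmost occurrences of the values promised by the repeating slots as the increasing prefix $\pi_L$, and one yet-to-be-inserted point per slot as the suffix $\pi_R$. The difference is in the other direction. The paper only verifies that the basis patterns \emph{themselves} are not slot-bounded (``once the increasing sequence has been inserted in the first $k$ indices there remains $k+1$ slots''), which shows every element of $\Pi$ lies outside $\SBH(k)$; to conclude that every RGF \emph{containing} such a pattern lies outside $\SBH(k)$ --- which is what the proposition asserts --- one additionally needs that slot-boundedness is inherited under pattern containment, a fact the paper leaves implicit. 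You instead prove this containment statement directly: from an occurrence of $\beta \in \mathcal{B}_k$ you cut the evolution after index $j_1 - 1$, observe that equal pattern values force equal values in $\pi$, so the $k$ values shared between the run and the permutation block give $k$ repeating slots straddling the cut, and then extract a $(k+1)$-st slot from the pattern value $k+1$: a repeating slot if its value already occurred in the prefix, and otherwise a forced new slot (which must exist since new slots cannot be manufactured from repeating ones, and which sits topmost by the remark preceding the proposition). This makes your proof self-contained where the paper's is terser, at the cost only of the bookkeeping you identify, all of which you carry out correctly.
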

\begin{proof}
    These Cayley permutations are RGFs and in their evolutions once the increasing sequence has been inserted in the first $k$ indices there remains $k+1$ slots, so these RGFs are not in $\mathcal{SB}_H(k)$.
    
    Let $\Pi$ be the set of size $2k+1$ and height $k+1$ Cayley permutations where the first $k$ indices form a strictly increasing sequence of the values 1 to $k$ and the last $k+1$ indices are some permutation of the values 1 to $k+1$. 
    For an RGF $\sigma \not\in \mathcal{SB}_H(k)$, there is a configuration in its evolution, say $c$, which has $k+1$ slots. 
    Apart from the topmost slot, these are all repeating slots so represent the promise of a repeat of a value which has already been placed in $c$.
    Taking the leftmost occurrence of these values creates an increasing sequence as the first occurrence of every value occurs in increasing order in an RGF.
    As there are at least $k$ many repeating slots, there are at least $k$ many values in this increasing sequence. Name this sequence $\pi_L$. 
    
    For every slot in $c$, take one point from $\sigma$ which is yet to be inserted in that slot. As there are $k+1$ slots, this creates a sequence of size $k+1$ with the smallest $k$ values being the same as those in $\pi_L$. Call this sequence $\pi_R$.
    Together, the occurrence of $\pi_L$ and $\pi_R$ in $\sigma$ creates an occurrence of a Cayley permutation of size $2k+1$ and height $k+1$ where the first $k$ indices form a strictly increasing sequence of the values 1 to $k$ and the last $k+1$ indices are some permutation of the values 1 to $k+1$, hence an element of $\Pi$.
    Therefore, any RGF which is not in $\mathcal{SB}_H(k)$ contains an element of $\Pi$, hence an avoiding set of $\mathcal{SB}_H(k)$ can be written as $\Pi$. 
\end{proof}

Proposition~\ref{prop: horizontal canonical SB(k) basis} allows us to prove a condition for when an RGF class is a subclass of $\mathcal{SB}_H(k)$ for some $k$, hence is regular, in Theorem \ref{thm: canonical classes regular condition}. The proof relies on Lemma~\ref{lem: horizontal concatenation contains jux} which gives a relation between horizontal concatenations and horizontal juxtapositions.

\begin{lemma}
    \label{lem: horizontal concatenation contains jux}
    A horizontal concatenation of type $\mathcal{H}_{I, I}$ or $\mathcal{H}_{I, D}$ and of size $2n$ contains every horizontal juxtaposition of the same type of size up to $n$.
\end{lemma}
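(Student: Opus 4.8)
The plan is to write each concatenation explicitly and then, for an arbitrary juxtaposition of the matching type, exhibit a concrete choice of positions in the concatenation that reads off as that juxtaposition. Fix $n$ and let $C$ be the horizontal concatenation of size $2n$. For type $\mathcal{H}_{I,I}$ this is $C = 1 2 \cdots n\, 1 2 \cdots n$, and for type $\mathcal{H}_{I,D}$ it is $C = 1 2 \cdots n\, n (n-1) \cdots 1$. In both cases the first $n$ indices (the \emph{first run}) carry the values $1,\dots,n$ and the last $n$ indices (the \emph{second run}) carry the values $1,\dots,n$ as well, increasingly in the first run and increasingly or decreasingly in the second run.

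Next I would take an arbitrary horizontal juxtaposition $\pi$ of the same type and of size $s \le n$; by definition $\pi$ is the juxtaposition of an increasing sequence $\sigma$ of length $a$ with a sequence $\tau$ of length $b$ (increasing for $\mathcal{H}_{I,I}$, decreasing for $\mathcal{H}_{I,D}$), where $a + b = s$. Since a run of type $I$ or $D$ standardises to a strictly monotone permutation, the prefix values $p_1 < \cdots < p_a$ of $\pi$ are distinct, as are its suffix values $q_1, \dots, q_b$, which are monotone. Writing $h$ for the height of $\pi$, we have $h \le s \le n$, so every value occurring in $\pi$ lies in $\{1,\dots,n\}$ and therefore appears in each run of $C$.

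The heart of the argument is the embedding. I would map the prefix of $\pi$ into the first run of $C$ by selecting the indices whose values are exactly $p_1,\dots,p_a$; because these values increase and the first run is increasing, the selected indices increase and the chosen subword reads $p_1\cdots p_a$. I would map the suffix of $\pi$ into the second run by selecting the indices whose values are exactly $q_1,\dots,q_b$, checking that reading the selected indices in increasing order reproduces $q_1,\dots,q_b$ in order: this is immediate when the second run is increasing, and holds equally when it is decreasing, since in a decreasing run the value $v$ sits at position $2n - v + 1$, so a decreasing value sequence corresponds to increasing positions. As the two selections lie in different runs there is no clash even when a value occurs in both $\sigma$ and $\tau$, and the resulting subsequence of $C$, read left to right, is precisely the word $\pi$; hence it standardises to $\pi$ and $C$ contains $\pi$.

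Since this is a direct construction I do not expect a deep obstacle; the two points needing care are verifying that the decreasing second run in the $\mathcal{H}_{I,D}$ case is traversed in the order that reproduces the decreasing suffix $\tau$, and pinning down the threshold. The latter is really the content of ``size up to $n$'': it is the bound $h \le s \le n$ that guarantees every value of $\pi$ is available in each run, which is exactly why a concatenation of size $2n$ (two full copies of $1,\dots,n$) suffices and no smaller one would. I would treat the two types uniformly, letting the sole difference be the indexing of the positions selected in the second run.
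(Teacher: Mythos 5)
Your proof is correct and takes essentially the same approach as the paper's: both arguments select, for each value of the juxtaposition $\pi$, the equally-valued entry in the corresponding run of the concatenation, relying on the fact that a juxtaposition of size $s \le n$ has height at most $n$ so all of its values appear in each run. If anything, your write-up is slightly more careful than the paper's, since you explicitly verify that in the $\mathcal{H}_{I,D}$ case the decreasing second run is traversed in the order that reproduces the decreasing suffix.
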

\begin{proof}
    Let $a_1 a_2 \cdots a_n b_1 b_2 \cdots b_n$ be a horizontal concatenation of size $2n$ in $\mathcal{H}_{I, I}$ or $\mathcal{H}_{I, D}$. Take $\pi$ to be an arbitrary horizontal juxtaposition of $\sigma$ and $\tau$ of the same type where $|\sigma| = k$, $|\tau| = l$ and $k + l = m$ where $m \leq n$. 
    For each value $v$ in $\pi$, if $v$ is in the first $k$ indices of $\pi$ then select the value $a_i$ such that $a_i = v$. If $v$ is in the last $l$ indices then select the value $b_j$ such that $b_j = v$. There can be at most $n$ different values in $\pi$ and as there are also $n$ different values in the horizontal concatenation of size $2n$, this selection of values is a subset of the horizontal concatenation which standardises to $\pi$.
    Therefore, a horizontal concatenation of size $2n$ in $\mathcal{H}_{I, I}$ or $\mathcal{H}_{I, D}$ contains every horizontal juxtaposition of the same type of size up to $n$.
\end{proof}

\begin{theorem}
    \label{thm: canonical classes regular condition}
    An RGF class $\mathcal{R}$ is a subclass of $\mathcal{SB}_H(k)$ for some $k$ if and only if $\mathcal{R}$ avoids a Cayley permutation from each of $\mathcal{H}_{I,I}$ and $\mathcal{H}_{I,D}$.
\end{theorem}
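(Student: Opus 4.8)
The plan is to route both implications through the explicit avoidance basis of $\SBH(k)$ supplied by Proposition~\ref{prop: horizontal canonical SB(k) basis}: writing $\Pi_k$ for the set of size $2k+1$, height $k+1$ Cayley permutations whose first $k$ entries are $1\,2\,\cdots\,k$ and whose last $k+1$ entries are a permutation of $1,\dots,k+1$, we have $\R \subseteq \SBH(k)$ if and only if $\R$ avoids every element of $\Pi_k$. Both directions then become statements about how the members of $\Pi_k$ sit inside the juxtaposition classes $\mathcal{H}_{I,I}$ and $\mathcal{H}_{I,D}$.

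For the forward direction I would exhibit two distinguished elements of $\Pi_k$. Choosing the last block to be increasing gives $1\,2\,\cdots\,k\;1\,2\,\cdots\,(k+1)$, a horizontal juxtaposition of two increasing sequences and hence a member of $\mathcal{H}_{I,I}$; choosing it to be decreasing gives $1\,2\,\cdots\,k\;(k+1)\,k\,\cdots\,1 \in \mathcal{H}_{I,D}$. If $\R \subseteq \SBH(k)$ then $\R$ avoids all of $\Pi_k$, so in particular it avoids these two Cayley permutations, producing the required member of each of $\mathcal{H}_{I,I}$ and $\mathcal{H}_{I,D}$.

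For the reverse direction, suppose $\R$ avoids some $h_1 \in \mathcal{H}_{I,I}$ and some $h_2 \in \mathcal{H}_{I,D}$, and set $M = \max(|h_1|,|h_2|)$. It suffices to find $k$ for which every $p \in \Pi_k$ contains $h_1$ or $h_2$: since $\R$ avoids both and containment is transitive, this forces $\R$ to avoid all of $\Pi_k$, giving $\R \subseteq \SBH(k)$. Fix $p \in \Pi_k$ and let $Q$ be its last block, a permutation of $\{1,\dots,k+1\}$. By the Erd\H{o}s--Szekeres theorem, taking $k \ge M^2$ guarantees a monotone subsequence of $Q$ of length at least $M+1$; deleting the entry $k+1$ if it occurs leaves a monotone subsequence $W$ of length at least $M$ using only values in $\{1,\dots,k\}$. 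Since the first block $1\,2\,\cdots\,k$ contains each of these values in increasing order, I can select the same value set from it. When $W$ is increasing these selections assemble into a copy of the size-$2M$ horizontal concatenation of $\mathcal{H}_{I,I}$, which contains $h_1$ by Lemma~\ref{lem: horizontal concatenation contains jux}; when $W$ is decreasing they assemble into the size-$2M$ horizontal concatenation of $\mathcal{H}_{I,D}$, which contains $h_2$. Either way $p$ contains $h_1$ or $h_2$.

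I expect the crux to be the value-matching inside the reverse direction. Lemma~\ref{lem: horizontal concatenation contains jux} is stated for horizontal \emph{concatenations}, whose two blocks use the same set of values, so to apply it I must manufacture a genuine concatenation rather than an arbitrary juxtaposition; this requires the monotone subsequence pulled from $Q$ to use values that also appear in the increasing first block. The RGF shape of $\Pi_k$ is exactly what makes this possible — the first block is $1\,2\,\cdots\,k$ and therefore contains every value up to $k$ — but one must discard the single value $k+1$ and track ranks carefully to be sure the combined subsequence standardises to the intended concatenation.
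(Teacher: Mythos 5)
Your proposal is correct and follows essentially the same route as the paper: the forward direction picks out the basis elements of $\SBH(k)$ with increasing and decreasing final blocks, and the reverse direction uses Erd\H{o}s--Szekeres on the final block (with $k$ on the order of the square of the avoided patterns' size) to extract a monotone subsequence, matches its values in the increasing first block to build a size-$2M$ horizontal concatenation, and invokes Lemma~\ref{lem: horizontal concatenation contains jux}. The only difference is cosmetic --- you apply Erd\H{o}s--Szekeres to all $k+1$ values and then discard the value $k+1$, whereas the paper restricts to the smallest $k$ values before applying it.
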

\begin{proof}
    Suppose that $\mathcal{R} \subseteq \mathcal{SB}_H(k)$ for some $k$. Then, by Proposition~\ref{prop: horizontal canonical SB(k) basis}, $\mathcal{R}$ avoids Cayley permutations of size $2k+1$ and height $k+1$ where the first $k$ indices form a strictly increasing sequence of the values 1 to $k$ and the last $k+1$ indices are some permutation of the values 1 to $k+1$.
    In particular, it avoids Cayley permutations of this form where the last $k+1$ indices are increasing or decreasing which are in $\mathcal{H}_{I,I}$ and $\mathcal{H}_{I,D}$ respectively.
    Therefore, $\mathcal{R}$ avoids a Cayley permutation from $\mathcal{H}_{I,I}$ and from $\mathcal{H}_{I,D}$.
    
    Now, suppose $\mathcal{R}$ is a class of RGFs which avoid a Cayley permutation from $\mathcal{H}_{I,I}$ and from $\mathcal{H}_{I,D}$. 
    Suppose that the longest of these two Cayley permutations is of size $n$ for some fixed $n$. Taking the avoiding set $\Pi$ of $\mathcal{SB}_H(k)$ to be as defined in Proposition~\ref{prop: horizontal canonical SB(k) basis}, we will show that there exists some integer $k$ such that every element in $\Pi$ contains a Cayley permutation from $\mathcal{H}_{I,I}$ or $\mathcal{H}_{I,D}$ that is avoided by $\mathcal{R}$, so $\mathcal{R} \subseteq \mathcal{SB}_H(k)$ for this $k$.
    
    We can take $k$ to be $n^2$ and $\sigma$ be any element of $\Pi$.
    Split $\sigma$ into the values $\pi_L$ and the values $\pi_R$ where $\pi_L$ is the first $k$ indices and $\pi_R$ is the last $k+1$ indices as before. In $\pi_R$, take the smallest $k$ values and call this $\pi_R'$. This sequence has size $k = n^2$. By the Erd\H{o}s-Szekeres Theorem~\cite{Erdos1935}, there is a monotone subsequence of size $n$ which is a subset of these points, which we call $\tau$. 
    Note that all the values in $\pi_R'$ have had a repeated value in $\pi_L$.
    
    Next, take the same $n$ values from $\pi_L$ which are in $\tau$ and call them sequence $\omega$. These already form an increasing sequence. Together, the subsequence $\omega \tau$ forms a horizontal concatenation of type $\mathcal{H}_{I,I}$ or $\mathcal{H}_{I,D}$ and of size $2n$ which is a subset of $\sigma$. By Lemma~\ref{lem: horizontal concatenation contains jux}, this horizontal concatenation contains every horizontal juxtaposition of the same type up to size $n$, including one avoided by $\mathcal{R}$.
    As $\sigma$ was arbitrary, this is true for all elements of $\Pi$ and so $\mathcal{R} \subseteq \mathcal{SB}_H(k)$.
\end{proof}

In the final part of this section we will look at \emph{(perfect) matchings}, also known as chord diagrams \cite{Nabergall2022}. A matching is a graph on $2m$ vertices such that every vertex has degree 1. They are in bijection with unordered set partitions such that every block is of size 2, hence also in bijection with RGFs with exactly two occurrences of each value.

The letters for the horizontal insertion encoding of RGFs in bijection with matchings is a subset of the letters for RGFs generally. When inserting into a new slot, as there will be a repeat of that value later there is a repeating slot created, so insertions into $\diamond$ are of the form $a_{i, 1}$. As each repeat occurs exactly once, repeating slots are filled with exactly one value, hence insertions into $\overline{\diamond}$ are of the form $f_{i, 0}$. Table~\ref{tab: matching horizontal insertion encoding letters} shows all possible types of insertions of the value $n$ into a slot for matchings.

\begin{table}[h]
    \centering
    \begin{tabular}{|cccl|}
        \hline
                     &                                      &     & $\diamond$            \\
        $d_{i, 1}: $ & $\diamond \: \rightarrow$            & $n$ & $\overline{\diamond}$ \\ 
        \hline
        $f_{i, 1}: $ & $\diamond \: \rightarrow$            & $n$ & $\overline{\diamond}$ \\
        \hline
        $f_{i, 0}: $ & $\overline{\diamond} \: \rightarrow$ & $n$ &                       \\
        \hline
    \end{tabular}
    \caption{Types of insertions of the value $n$ into a slot at index $i$ to create RGFs of matchings.}
    \label{tab: matching horizontal insertion encoding letters}
\end{table}

Using this reduced alphabet, the horizontal insertion encoding for the RGF 122313 of a matching is given by the word $d_{1,1}d_{2,1}f_{2,0}f_{2,1}f_{1,0}f_{1,0}$ as shown below.

\begin{center}
  \resizebox{\linewidth}{!}{%
    \begin{tikzpicture}
        \node at (0,0) {$\diamond$};
        
        \node at (2em,0) {$\rightarrow$};
        \node at (2em,1em) {$d_{1, 1}$};
        
        \node at (4em, 0) {$1$};
        \node at (5em, 1em) {$\diamond$};
        \node at (5em, 0) {$\overline{\diamond}$};
        
        \node at (7em, 0) {$\rightarrow$};
        \node at (7em,1em) {$d_{2, 1}$};
        
        \node at (9em, 0) {$1$};
        \node at (9.6em, 1em) {$2$};
        \node at (10.6em, 2em) {$\diamond$};
        \node at (10.6em, 1em) {$\overline{\diamond}$};
        \node at (10.6em, 0) {$\overline{\diamond}$};
        
        \node at (12.6em, 0) {$\rightarrow$};
        \node at (12.6em,1em) {$f_{2,0}$};
        
        \node at (14.6em, 0) {$1$};
        \node at (15.2em, 1em) {$2$};
        \node at (15.8em, 1em) {$2$};
        \node at (16.8em, 2em) {$\diamond$};
        \node at (16.8em, 0) {$\overline{\diamond}$};
        
        \node at (18.8em, 0) {$\rightarrow$};
        \node at (18.8em,1em) {$f_{2, 1}$};
        
        \node at (20.8em, 0) {$1$};
        \node at (21.4em, 1em) {$2$};
        \node at (22em, 1em) {$2$};
        \node at (22.6em, 2em) {$3$};
        \node at (23.6em, 2em) {$\overline{\diamond}$};
        \node at (23.6em, 0) {$\overline{\diamond}$};
        
        \node at (25.6em, 0) {$\rightarrow$};
        \node at (25.6em,1em) {$f_{1, 0}$};
        
        \node at (27.6em, 0) {$1$};
        \node at (28.2em, 1em) {$2$};
        \node at (28.8em, 1em) {$2$};
        \node at (29.4em, 2em) {$3$};
        \node at (30em, 0em) {$1$};
        \node at (31em, 0) {$\overline{\diamond}$};
        
        \node at (33em, 0) {$\rightarrow$};
        \node at (33em,1em) {$f_{1, 0}$};
        
        \node at (35em, 0) {$1$};
        \node at (35.6em, 1em) {$2$};
        \node at (36.2em, 1em) {$2$};
        \node at (36.8em, 2em) {$3$};
        \node at (37.4em, 0em) {$1$};
        \node at (38em, 2em) {$3$};
    \end{tikzpicture}
  }%
\end{center}

Borrowing notation from Nabergall~\cite{Nabergall2022}, for a set of Cayley permutations $\Pi$ we denote the set of RGFs in bijection with matchings which avoid all Cayley permutations in $\Pi$ by $\mathcal{D}(\Pi)$.
We also define $\mathcal{SBM}_H(k)$ to be the set of all RGFs of matchings with evolutions that have at most $k$ slots in any configuration. 
An avoiding set of $\mathcal{SBM}_H(k)$ can be written the same as the avoiding set of $\mathcal{SB}_H(k)$ defined in Proposition~\ref{prop: horizontal canonical SB(k) basis} and the proof of Proposition~\ref{prop: horizontal canonical SB(k) basis} holds for matchings therefore Corollary~\ref{cor: slot bounded matchings} is a corollary to Proposition~\ref{prop: horizontal canonical SB(k) basis}.

\begin{corollary}
    \label{cor: slot bounded matchings}
    For each integer $k$, an avoiding set $\Pi$ of $\mathcal{SBM}_H(k)$ can be written as the set of all Cayley permutations of size $2k+1$ and height $k+1$ where the first $k$ indices form a strictly increasing sequence of the values 1 to $k$ and the last $k+1$ indices are some permutation of the values 1 to $k+1$.
\end{corollary}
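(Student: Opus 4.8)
The plan is to deduce this as an immediate consequence of Proposition~\ref{prop: horizontal canonical SB(k) basis} by intersecting with the matching condition, rather than re-running the containment argument from scratch. The key observation I would start from is that the horizontal evolution of a Cayley permutation is unique, so an RGF of a matching has exactly one evolution, and that evolution is the very same sequence of configurations whether the object is regarded as a general RGF or as an RGF of a matching: the matching alphabet of Table~\ref{tab: matching horizontal insertion encoding letters} is simply the subalphabet that such an evolution happens to use. Hence the number of slots occurring in each configuration is an intrinsic quantity of the RGF, and an RGF of a matching lies in $\SBMH(k)$ if and only if it lies in $\SBH(k)$.

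With this in hand I would record the set identity
$$\SBMH(k) = \SBH(k) \cap \DD(\varnothing),$$
where $\DD(\varnothing)$ denotes the set of all RGFs of matchings. This is precisely the content of the previous paragraph: a matching RGF has at most $k$ slots in every configuration of its evolution exactly when, viewed as an arbitrary RGF, it belongs to $\SBH(k)$. Applying Proposition~\ref{prop: horizontal canonical SB(k) basis}, which gives $\SBH(k) = \R(\Pi)$ for the set $\Pi$ of all Cayley permutations of size $2k+1$ and height $k+1$ whose first $k$ entries form a strictly increasing run of $1$ through $k$ and whose last $k+1$ entries are a permutation of $1$ through $k+1$, and substituting into the identity yields
$$\SBMH(k) = \R(\Pi) \cap \DD(\varnothing) = \DD(\Pi).$$
This exhibits $\Pi$ as an avoiding set of $\SBMH(k)$, which is exactly the claim.

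I do not expect any genuine obstacle here, since the statement is a corollary; the only point requiring care is that the patterns in $\Pi$ are general Cayley permutations and need not themselves be matchings. This causes no difficulty, because pattern containment is defined for all Cayley permutations and every matching RGF is in particular a Cayley permutation, so ``avoiding $\Pi$'' is meaningful for a matching RGF and coincides with the notion used in Proposition~\ref{prop: horizontal canonical SB(k) basis}. Consequently an RGF of a matching avoids $\Pi$ as a matching RGF precisely when it avoids $\Pi$ as an RGF, justifying the final equality $\R(\Pi) \cap \DD(\varnothing) = \DD(\Pi)$ and completing the argument.
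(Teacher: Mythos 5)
Your proof is correct and takes the same route the paper intends: the paper states this result as an immediate corollary of Proposition~\ref{prop: horizontal canonical SB(k) basis} with no separate argument, and your identity $\SBMH(k) = \SBH(k) \cap \DD(\varnothing) = \R(\Pi) \cap \DD(\varnothing) = \DD(\Pi)$, justified by the observation that the evolution (hence the slot count) of a matching RGF is intrinsic to it as a Cayley permutation, is exactly the formalization of that deduction. Your closing remark that the patterns in $\Pi$ need not themselves be matchings, yet still serve as an avoiding set for the matching class, is the right point of care and is handled correctly.
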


Using Corollary~\ref{cor: slot bounded matchings}, we can prove a condition for when a class $\mathcal{D}(\Pi)$ is a subclass of $\mathcal{SBM}_H(k)$ for some $k$, hence is regular, in Corollary~\ref{cor: matching condition to be reg}. 
When this is the case, by using an adaption of the algorithm in~\cite[Section 5]{Bean2025} we can enumerate the classes $\mathcal{D}(\Pi)$ which have a regular horizontal insertion encoding.
Corollary~\ref{cor: matching condition to be reg} is a corollary to Theorem~\ref{thm: canonical classes regular condition} as the argument of the proof is the same.

\begin{corollary}
    \label{cor: matching condition to be reg}
    A class $\mathcal{D}(\Pi)$ is a subclass of $\mathcal{SBM}_H(k)$ for some $k$ if and only if $\mathcal{D}(\Pi)$ avoids a Cayley permutation from each of $\mathcal{H}_{I,I}$ and $\mathcal{H}_{I,D}$.
\end{corollary}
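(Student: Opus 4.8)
The plan is to follow the proof of Theorem~\ref{thm: canonical classes regular condition} essentially verbatim, replacing $\SBH(k)$ by $\SBMH(k)$, the class $\R$ by $\mathcal{D}(\Pi)$, and the invocation of Proposition~\ref{prop: horizontal canonical SB(k) basis} by Corollary~\ref{cor: slot bounded matchings}. The key enabling fact is that Corollary~\ref{cor: slot bounded matchings} gives an avoiding set for $\SBMH(k)$ of exactly the same form as the avoiding set of $\SBH(k)$, so every combinatorial object manipulated in the theorem's argument is literally the same object here.

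For the forward direction I would suppose $\mathcal{D}(\Pi)\subseteq\SBMH(k)$. By Corollary~\ref{cor: slot bounded matchings}, $\mathcal{D}(\Pi)$ then avoids every Cayley permutation of size $2k+1$ and height $k+1$ whose first $k$ indices increase through $1,\dots,k$ and whose last $k+1$ indices permute $1,\dots,k+1$. Specialising the last block to be increasing produces a member of $\mathcal{H}_{I,I}$, and specialising it to be decreasing produces a member of $\mathcal{H}_{I,D}$; both are avoided by $\mathcal{D}(\Pi)$, which establishes this implication.

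For the converse I would assume $\mathcal{D}(\Pi)$ avoids a Cayley permutation $\mu\in\mathcal{H}_{I,I}$ and a Cayley permutation $\nu\in\mathcal{H}_{I,D}$, let $n$ bound their sizes, set $k=n^2$, and let $\Pi_k$ be the avoiding set of $\SBMH(k)$ from Corollary~\ref{cor: slot bounded matchings}. For an arbitrary $\sigma\in\Pi_k$ I would split it into the increasing prefix $\pi_L$ of length $k$ and the suffix $\pi_R$ of length $k+1$, take the $k$ smallest values of $\pi_R$ as $\pi_R'$, and apply the Erd\H{o}s--Szekeres theorem to obtain a monotone subsequence $\tau$ of size $n$. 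Matching each value of $\tau$ to its repeated copy in $\pi_L$ yields an increasing sequence $\omega$, and $\omega\tau$ is a horizontal concatenation of size $2n$ lying in $\mathcal{H}_{I,I}$ or $\mathcal{H}_{I,D}$. By Lemma~\ref{lem: horizontal concatenation contains jux} this concatenation contains every horizontal juxtaposition of its type up to size $n$, in particular $\mu$ or $\nu$. Hence every $\sigma\in\Pi_k$ contains a pattern avoided by $\mathcal{D}(\Pi)$, so any RGF of a matching that contained $\sigma$ would contain $\mu$ or $\nu$; since $\mathcal{D}(\Pi)$ avoids both, it avoids all of $\Pi_k$, giving $\mathcal{D}(\Pi)\subseteq\SBMH(k)$.

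I expect the only genuinely new point — and the one deserving a sentence of care — to be confirming that the restriction to RGFs of matchings does nothing to disturb the argument. The elements of $\Pi_k$ are the same Cayley permutations as in the non-matching setting, so the Erd\H{o}s--Szekeres extraction and the concatenation-containment step carry over unchanged; the one thing to note is that avoidance of $\mu$ and $\nu$ by the matching class still forces avoidance of all of $\Pi_k$, which holds purely because pattern containment is transitive and does not require the intermediate patterns $\sigma$ themselves to be matchings. This is exactly why the corollary follows from the theorem ``as the argument of the proof is the same.''
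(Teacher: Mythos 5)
Your proposal is correct and matches the paper exactly: the paper gives no separate proof, stating only that Corollary~\ref{cor: matching condition to be reg} follows from Theorem~\ref{thm: canonical classes regular condition} ``as the argument of the proof is the same,'' with Corollary~\ref{cor: slot bounded matchings} playing the role of Proposition~\ref{prop: horizontal canonical SB(k) basis}, which is precisely the substitution you carry out. Your closing observation --- that the elements of the avoiding set $\Pi_k$ need not themselves be RGFs of matchings, and that transitivity of pattern containment is what makes the argument go through regardless --- is the right point to flag and is handled correctly.
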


\section{Vertical insertion encoding}
\label{sec:vertical}

Cayley permutations can be created by inserting values from smallest to largest. To make this procedure unique, we insert leftmost occurrences of values first. We call this process the (vertical) evolution of a Cayley permutation. For example, the evolution of the Cayley permutation 242143 is shown below.

\begin{table}[h]
  \centering
  \begin{tabular}{cc}
     & $\diamond$                  \\
     & $\diamond~1~\diamond$       \\
     & $2~\diamond~1~\diamond$     \\
     & $2~\diamond~2~1~\diamond$  \\
     & $2~\diamond~2~1~\diamond~3$ \\
     & $2~4~2~1~\diamond~3$        \\
     & $2~4~2~1~4~3$               \\
  \end{tabular}
\end{table}

As before, the intermediate steps are called configurations.
Configurations contain the smallest, leftmost values in the Cayley permutation and only one type of slot. Evolutions still begin with a single empty slot and end with a completed Cayley permutation. 
For a configuration $c$, we define the \emph{derivations} of $c$ as the set of all Cayley permutations which have $c$ in its evolution, so 242143 is a derivation of the configuration $2~\diamond~2~1~\diamond~3$, as are 242153 and 2421443 for example.

There are four different types of insertions into a slot denoted by $\ell$, $m$, $r$, and $f$ where $\ell$ implies the new value is on the \emph{left} of a new slot, $r$ implies it is on the \emph{right} of a new slot, $m$ implies it is in the \emph{middle} of two new slots and $f$ implies that the slot has been \emph{filled} with the new value, so no more new slots have been placed. These are shown in Figure~\ref{tab:letters} where $n$ is the new value being inserted.

\begin{figure}[h]
  \centering
  \begin{tabular}{cccc}
    $\ell$: & $\diamond$ & $\rightarrow$ & $n \diamond$          \\
    $m$:    & $\diamond$ & $\rightarrow$ & $\diamond n \diamond$ \\
    $r$:    & $\diamond$ & $\rightarrow$ & $\diamond n$          \\
    $f$:    & $\diamond$ & $\rightarrow$ & $n$                   \\
  \end{tabular}
  \caption{The four ways to insert a value $n$ into a slot.}
  \label{tab:letters}
\end{figure}

Each insertion can be described by the index of the slot being inserted into, if the value inserted is the same as the last value that was inserted or one larger, and how the slot is replaced.  We encode these insertions by letters of the form $a_{i, j}$ where $a \in \{\ell, m, r, f\}$, $i \in \mathbb{N}$ is the index of the slot, starting with the leftmost slot being slot 1, and $j \in \{0, 1\}$ is $0$ if the value being inserted is the same as the last value which was inserted else 1 if it has increased. By convention, inserting into the initial slot in the first configuration of an evolution is an increase in value. The vertical insertion encoding for 242143 is the word $m_{1,1}\ell_{1,1}r_{1,0}r_{2,1}f_{1,1}f_{1,0}$, the evolution for which is shown again below with the encoding.

\begin{table}[h]
  \centering
  \begin{tabular}{ccc}
     & $\diamond$                  &              \\
     & $\diamond~1~\diamond$       & $m_{1,1}$    \\
     & $2~\diamond~1~\diamond$     & $\ell_{1,1}$ \\
     & $2~\diamond~2~1~\diamond$  & $r_{1,0}$    \\
     & $2~\diamond~2~1~\diamond~3$ & $r_{2,1}$    \\
     & $2~4~2~1~\diamond~3$        & $f_{1,1}$    \\
     & $2~4~2~1~4~3$               & $f_{1,0}$    \\
  \end{tabular}
\end{table}

For each Cayley permutation $\pi$, the vertical insertion encoding of $\pi$ is the unique word corresponding to the evolution of $\pi$. For a set of Cayley permutations $S$, the vertical insertion encoding of $S$ is the language consisting of the vertical insertion encodings of the Cayley permutations in $S$.

In the evolution of an RGF, any new maximum value inserted will be in the leftmost slot of a configuration otherwise in all the derivations of the new configuration there will be a larger value to the left of the new maximum value. Therefore, the vertical insertion encoding of RGFs will contain only the letters in Proposition~\ref{prop: canonical vertical insertion encoding}.

\begin{proposition}
  \label{prop: canonical vertical insertion encoding}
  A Cayley permutation is an RGF if and only if its vertical insertion encoding contains only the letters $f_{1,1}$, $\ell_{1,1}$, and $a_{i, 0}$ for $a \in \{f, \ell, m, r\}$ and $i \in \mathbb{N}$.
\end{proposition}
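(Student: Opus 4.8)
The plan is to prove both directions using the same structural fact about RGFs that appeared in the proof of Proposition~\ref{prop: horizontal RGF letters}: a Cayley permutation $\pi$ is an RGF if and only if, for every value $n$ occurring in $\pi$ and every $k < n$, the leftmost occurrence of $k$ occurs at a smaller index than every occurrence of $n$. Equivalently, the first occurrences of the values $1, 2, 3, \dots$ appear from left to right in increasing order of value, so every index strictly to the left of the first occurrence of $n$ carries a value smaller than $n$. I would combine this with the two conventions governing the vertical evolution: values are inserted from smallest to largest, and for a fixed value its leftmost not-yet-placed occurrence is inserted first. In particular, an insertion with $j = 1$ always places the first occurrence of a brand-new maximum value $n$, at which moment every occurrence of every value less than $n$ has already been placed, whereas an insertion with $j = 0$ only ever places a repeat of the current maximum.

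For the forward direction, I would fix an RGF $\pi$ and examine an arbitrary $j = 1$ insertion, which introduces the first occurrence of some new maximum $n$. By the characterisation above, every index of $\pi$ lying strictly to the left of this first occurrence holds a value smaller than $n$, and all such values are already fully placed in the current configuration. Hence there can be no slot to the left of the position at which $n$ is inserted: a slot there would have to be filled later by a value at least $n$, but by the leftmost-first convention it could not be filled by $n$ itself, so it would force a value larger than $n$ to the left of the first occurrence of $n$, contradicting the RGF property. With no slot to its left, the insertion must use the leftmost slot (so $i = 1$) and must not create a new slot on its left (so the letter is $\ell$ or $f$), giving exactly $\ell_{1,1}$ or $f_{1,1}$. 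Since $j = 0$ letters are unrestricted, this shows the encoding uses only the permitted letters.

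For the reverse direction, I would argue by induction along the evolution that every reachable configuration satisfies the invariant that all of its slots lie to the right of all first occurrences placed so far, and hence that those first occurrences appear in increasing order of value. The base configuration $\diamond$ satisfies this vacuously. An $\ell_{1,1}$ or $f_{1,1}$ insertion places the new maximum into the leftmost slot, which by the invariant lies to the right of every earlier first occurrence, so the new first occurrence is correctly ordered, and the slot it possibly creates, together with all surviving slots, stays to the right of this new first occurrence. A $j = 0$ insertion merely refines an existing slot, which already lies to the right of every first occurrence, so it introduces no first occurrence and preserves the invariant. Applying the invariant to the final, slot-free configuration shows the first occurrences of $1, 2, 3, \dots$ appear in increasing order, which is exactly the condition for $\pi$ to be an RGF.

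The main obstacle I expect is making the forward direction's slot argument fully rigorous: I must argue carefully that a slot lying to the left of the newly inserted first occurrence of $n$ is genuinely forced to receive a value exceeding $n$, ruling out both smaller values (already placed, by the smallest-first convention) and further copies of $n$ (excluded to the left of its first occurrence by the leftmost-first convention), while accounting for the possibility that this slot is later subdivided before being filled. Framing this in terms of derivations, by showing that every Cayley permutation having such a configuration in its evolution places a larger value to the left of the first occurrence of $n$, is the cleanest way to close the gap, and is the analogue of the derivation argument foreshadowed in the paragraph preceding the proposition.
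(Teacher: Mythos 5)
Your proof is correct and takes essentially the same approach as the paper's: your forward direction is the paper's contrapositive argument run directly (a slot to the left of a newly inserted maximum $n$ cannot be filled by values below $n$, which are already placed, nor by $n$ itself, by the leftmost-first convention, so it forces a value larger than $n$ before the first occurrence of $n$, contradicting the RGF property), and your reverse direction formalises, via an explicit induction invariant, the paper's observation that the restricted letters keep every slot to the right of all first occurrences, so first occurrences appear in increasing order of value. The invariant and the remark about slots being subdivided before being filled make the argument slightly more rigorous than the paper's prose, but the underlying ideas are identical.
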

\begin{proof}
  Let $\pi$ be a Cayley permutation whose vertical insertion encoding contains a letter not of the form $f_{1,1}$, $\ell_{1,1}$, or $a_{i, 0}$ for $a \in \{f, \ell, m, r\}$ and $i \in \mathbb{N}$. Then the vertical insertion encoding of $\pi$ contains a letter $b$ of the form $r_{i, 1}$ or $m_{i,1}$ where $i \in \mathbb{N}$ or $f_{i,1}$, $\ell_{i,1}$ where $i \neq 1$. These are insertions of a new maximum with $r$ or $m$, or the insertion of a new maximum not in the first slot. Either way, in the evolution of $\pi$ the insertion for $b$ will be a new maximum $n$ with at least one slot at a smaller index. Each of these slots will eventually be filled to obtain $\pi$, for example with value $m$. As each of these slots are at a smaller index than $n$ and $m \neq n$ we have $m > n$. Therefore, in $\pi$ there exists an occurrence of $m$ with no occurrence of $n$ at a smaller index even though $n < m$, so $\pi$ is not an RGF.
  
  Now, let $\pi$ be a Cayley permutation whose vertical insertion encoding contains only the letters $f_{1,1}$, $\ell_{1,1}$, and $a_{i, 0}$ for $a \in \{f, \ell, m, r\}$ and $i \in \mathbb{N}$. Then in the evolution of $\pi$, every new maximum value is inserted with $f$ or $\ell$ and only into the first slot of a configuration. So, for every new maximum value $n$ inserted there are no slots to the left of it. There must have also been an occurrence of every value smaller than $n$ inserted into the first slot in the configuration and these must all be at smaller indices than the first occurrence of $n$. Therefore, for the first occurrence of any value $k$ in $\pi$, there must be an occurrence of every value smaller than $k$ at smaller indices, so $\pi$ is an RGF.
\end{proof}

Once again, the slot-bounded classes are precisely those which have a regular vertical insertion encoding. By restricting the alphabet, the algorithm in \cite[Section 5]{Bean2025} to compute the regular vertical insertion encoding of Cayley permutations can be adapted to compute the regular vertical insertion encoding of RGFs. For the remainder of this section, we will give a characterisation of the RGF classes which have a regular vertical insertion encoding. We define $\mathcal{SB}_V(k)$ to be the set of $k$ slot-bounded RGFs.

\begin{proposition}
  \label{prop:basis-canonical-slot-bounded}
  For each positive integer $k$, the set $\mathcal{SB}_V(k)$ is the set of RGFs that avoid all size $2k + n + 1$ Cayley permutations which are derivations of configurations of the form 
  \begin{equation*}
    1~2~\cdots~n~\diamond~a_1~\diamond~a_2~\diamond~\cdots~\diamond~a_k~\diamond
  \end{equation*} for any size $k$ Cayley permutation $a_1 a_2 \cdots a_k$ where $n$ is the maximum value in $a_1 a_2 \cdots a_k$.
\end{proposition}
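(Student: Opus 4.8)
The plan is to prove $\SB(k)=\R(\Pi)$, where $\Pi$ is the stated set of derivations, by the same two-step strategy used for the horizontal encoding in Proposition~\ref{prop: horizontal canonical SB(k) basis}. First I would dispatch the easy inclusion: each element of $\Pi$ is a derivation of a configuration $12\cdots n\,\diamond a_1\diamond\cdots\diamond a_k\diamond$, so that configuration, which has exactly $k+1$ slots, lies in its evolution; hence every element of $\Pi$ is an RGF that is not in $\SB(k)$. Since slot-boundedness is preserved under passing to sub-Cayley-permutations (deleting points cannot increase, at any threshold, the number of maximal runs of not-yet-placed positions, hence cannot increase the slot count), $\SB(k)$ is a class, and any RGF containing an element of $\Pi$ fails to lie in $\SB(k)$. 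This gives $\SB(k)\subseteq\R(\Pi)$.

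For the reverse inclusion I would take an RGF $\sigma\notin\SB(k)$ and exhibit an occurrence of an element of $\Pi$. The structural fact I would use is that in the vertical evolution the slots of a configuration are in bijection with the maximal runs of not-yet-placed positions; in particular, the configuration reached immediately after all copies of the values $1,\dots,w$ have been inserted has one slot per maximal run of positions of $\sigma$ whose value exceeds $w$. As $\sigma\notin\SB(k)$, there is a threshold $w$ for which these positions form at least $k+1$ runs $I_0,\dots,I_k$ (reading left to right), separated by positions of value $\le w$. From each of the $k$ gaps between consecutive runs I pick one position, giving values $a_1,\dots,a_k$, and set $n=\max_i a_i\le w$. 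Because $\sigma$ is an RGF the first occurrences of $1,\dots,n$ are increasing and, having value $\le w$ and hence less than every value in $I_0$, all lie strictly to the left of $I_0$; these provide the spine $12\cdots n$. It then remains only to choose one position of value $>w$ from each of the $k+1$ runs to fill the slots.

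The main obstacle is precisely this last choice. The extracted subsequence is a derivation of $12\cdots n\,\diamond a_1\diamond\cdots\diamond a_k\diamond$ exactly when the chosen fillings themselves standardise to an RGF, so that the whole word is an RGF as required by Proposition~\ref{prop: canonical vertical insertion encoding}; and simple examples show that neither the leftmost position, nor the smallest value, nor the position actually filled first by $\sigma$'s own evolution need have this property (e.g.\ for $\sigma=123452524$ with $w=2$ the naive choices give fillings standardising to $132$, whereas a valid derivation needs fillings standardising to $121$, forcing a non-obvious selection inside $I_0$). The crux is therefore a transversal lemma: if $\rho$ is an RGF partitioned into consecutive nonempty blocks $B_0,\dots,B_k$ (here $\rho$ is the subword of $\sigma$ on the values exceeding $w$, and $B_j$ is its restriction to $I_j$), then one can pick a value from each block whose concatenation standardises to an RGF.

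I expect this transversal lemma to require the real work. I would prove it by induction, exploiting the defining feature of an RGF that the values first appearing in each successive block form a consecutive range $\{M_{j-1}+1,\dots,M_j\}$ of the running maximum, so that a new value is forced only when a block contains no already-used value. The delicate point, and where a naive left-to-right greedy fails, is compatibility: a block that forces a large value (for instance a singleton block equal to the current maximum) may occur before a later block that forces a smaller value, and one must have introduced that smaller value early enough for first-appearances to remain increasing. I would handle this by inducting on the maximum value of $\rho$, fixing the forced picks at the blocks equal to that maximum and recursing on $\rho$ with the maximum deleted, using the fact that every value below the maximum first occurs no later than the maximum's first block to guarantee the recursive choices can be made before the forced picks. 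Granting the lemma, the spine, the $a_i$, and the chosen fillings together constitute an occurrence in $\sigma$ of an element of $\Pi$, yielding $\R(\Pi)\subseteq\SB(k)$ and completing the proof.
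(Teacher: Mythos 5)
Your first direction is essentially sound (it is the paper's first direction repackaged as a closure property), but both pillars of your second direction fail.

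The reduction to thresholds is false. You claim that $\sigma \notin \SB(k)$ forces some threshold $w$ --- the configuration reached after \emph{all} copies of the values $1,\dots,w$ are placed --- to have at least $k+1$ runs of positions with value exceeding $w$. But configurations also occur part-way through inserting the copies of a single value, and the maximum slot count can be attained only there. Take $\sigma = 12323121$ and $k=2$: after placing the three 1's and the 2's in positions 2 and 4 (but not yet the 2 in position 7), the configuration is $12\diamond 2\diamond 1\diamond 1$, which has three slots, so $\sigma \notin \SB(2)$; yet the number of runs of positions with value $>w$ is $1,2,2,0$ for $w=0,1,2,3$, so your argument never gets started on this RGF. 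The paper's proof works with an arbitrary configuration (all values less than some $v$ placed, plus a left-to-right prefix of the copies of $v$), which is exactly what is needed to avoid this.

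Your ``transversal lemma'' then aims at a requirement that does not exist, while missing the requirement that does. A derivation of a configuration is by definition a Cayley permutation having that configuration in its evolution; it need \emph{not} be an RGF, and the avoided set in Proposition~\ref{prop:basis-canonical-slot-bounded} consists of Cayley permutations, not RGFs (e.g.\ $1232524$, which is not an RGF, is a perfectly good derivation of $12\diamond 2\diamond 2\diamond$). In fact \emph{any} choice of one value per slot works: every chosen value is at least every placed value, and a chosen value equal to the current value $v$ necessarily lies to the right of every placed copy of $v$, so in the standardised extracted word the spine and the $a_i$ are always inserted before the chosen fills and the configuration $12\cdots n\diamond a_1\diamond\cdots\diamond a_k\diamond$ always appears in its evolution. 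What your example $123452524$ actually exposes is a different, genuine issue that your lemma cannot repair: there the gap values are forced to be $a_1a_2 = 22$, which is not a Cayley permutation, so $12\diamond 2\diamond 2\diamond$ is not a configuration of the allowed form, and \emph{no} choice of fills helps --- even your ``valid'' choice gives $1232423$, whose only three-slot configuration is again $12\diamond 2\diamond 2\diamond$, hence still not an element of $\Pi$. The correct repair constrains the spine, not the fills: take the first occurrences of only the \emph{distinct} values occurring among $a_1,\dots,a_k$ (here just the value 2), so the extracted word standardises to a derivation of $1\diamond 1\diamond 1\diamond$, e.g.\ $121413 \in \Pi$. (This re-standardisation step is also glossed over in the paper's own proof, which writes the spine as $12\cdots n$ with $n = \max_i a_i$, but the fix is immediate; your proposal, by contrast, builds its main technical effort around a condition that is neither necessary nor sufficient.)
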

\begin{proof}
  We will prove both directions by contrapositive. First, let $\pi$ be an RGF which contains a Cayley permutation $\sigma$ which is a derivation of a configuration of the form in the proposition.
  In the evolution of $\pi$ there is a configuration $c$ where the values corresponding to $n a_1 a_2 \cdots a_k$ in $\pi$ have just been placed. Due to the occurrence of $\sigma$ in $\pi$, there is an occurrence of at least one value after each of the $k+1$ values which are yet to be placed. Hence, $c$ has $k+1$ many slots, so $\pi \not\in \mathcal{SB}_V(k)$.
  
  Take an RGF $\pi$ not in $\mathcal{SB}_V(k)$. The evolution of $\pi$ contains a configuration with exactly $k + 1$ slots as each insertion increases the number of slots by at most $1$. Pick a value between each of the $k + 1$ slots, say $a_1, a_2, \ldots, a_k$. As $\pi$ is an RGF, if $n$ is the maximum value in $a_1 a_2 \cdots a_k$, then before the first slot each value $1, 2, \ldots, n$ also appears. Pick the first occurrence of each of these values. Finally, pick a value of $\pi$ in each slot. Together, these $2k + n + 1$ values form a derivation of the configuration $1~2~\cdots~n~\diamond~a_1~\diamond~a_2~\diamond~\cdots~\diamond~a_k~\diamond$.
\end{proof}
An RGF class $\mathcal{R}$ is slot bounded if and only if it is a subclass of $\mathcal{SB}_V(k)$ for some $k$. We will now outline a check on the Cayley permutations avoided by $\mathcal{R}$ for when this is the case. 
Three classes of interest are $\mathcal{V}_{C, I}$, $\mathcal{V}_{C, D}$ and $\mathcal{V}_{C,C}$ and their vertical alternations, as defined in Section~\ref{sec:intro}. For each class of vertical juxtapositions, Lemma~\ref{rem: vertical alternation} gives a relation between the vertical alternations and other vertical juxtapositions in the class.

\begin{lemma}[{\cite[Lemma 4.2]{Bean2025}}]
  \label{rem: vertical alternation}
  For each $A, B \in \{ I, D, C \}$, the vertical alternation of type $V_{A, B}$ of size $2n$ contains all vertical juxtapositions of type $V_{A, B}$ up to size $n$.
\end{lemma}

Vertical juxtapositions can be thought of as grid classes. Informally, the grid class of a matrix $M$ is the set of all Cayley permutations whose plots can be partitioned into cells where the points in each cell form an increasing, decreasing or constant sequence, as determined by $M$.
To define this more formally we first introduce some notation. For a Cayley permutation $\pi$ where $|\pi| = n$, the height of $\pi$ is $m$ and sets $A\subseteq [n]$, $B \subseteq [m]$, we write $\pi[A \times B]$ to denote the subset of points of $\pi$ with indices in $A$ and values in $B$. For $\pi = 135641742$ we have $\pi[[3, 7] \times [4]]=41$ for example, which is shown in Figure~\ref{fig: subset of points} with a dashed rectangle around the points in $\pi[[3, 7] \times [4]]$.

\begin{figure}[h]
  \centering
  \resizebox{6cm}{!}{
    \begin{tikzpicture}
      \draw[step=1cm,gray,thin] (0.5,0.5) grid (9.5,7.5);
      \foreach \x/\y in {1/3, 2/1, 3/5, 4/6, 5/4, 6/1, 7/7, 8/4, 9/2}
      \fill[black] (\x,\y) circle (0.15cm);
      \draw[dashed]  (2.5,0.5) rectangle (7.5,4.5);
    \end{tikzpicture}
  }
  \caption{The Cayley permutation $135641742$ with the points in $\pi[[3, 7] \times [4]]$ shown in a dashed rectangle.}
  \label{fig: subset of points}
\end{figure}
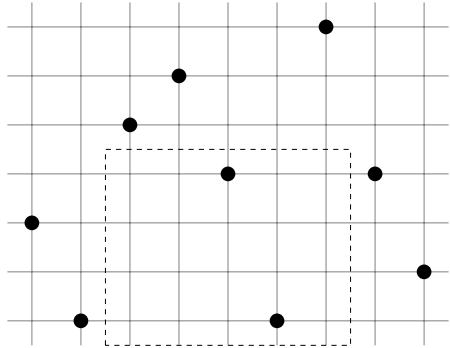

Borrowing notation from Huczynska and Vatter \cite{Huczynska2006}, we will index matrices beginning in the bottom left corner to match the way we plot a Cayley permutation, so the indices of a $2 \times 2$ matrix are
$
  \Big{(}
  \begin{smallmatrix}
    (1,2) & (2,2)\\
    (1,1) & (2,1)
  \end{smallmatrix}
  \Big{)}
$.
For a $t \times u$ matrix $M$, meaning $M$ has $t$ columns and $u$ rows, with entries in $\{I, D, C, 0\}$, we define an \emph{$M$-gridding} of a Cayley permutation $\pi$ where $\pi$ is of size $n$ with maximum value $m$ to be a sequence of columns $1 = c_1 \leq c_2 \leq \dots \leq c_{t+1} = n+1$ and a sequence of rows $1 = r_1 \leq r_2 \leq \dots \leq r_{u+1} = m+1$ such that for all $k \in [t]$ and $l \in [u]$, $\pi[[c_k, c_{k+1}) \times [r_l, r_{l+1})]$ is:
\begin{itemize}
  \item increasing if $M_{k,l} = I$,
  \item decreasing if $M_{k,l} = D$,
  \item constant if $M_{k,l} = C$,
  \item empty if $M_{k,l} = 0$.
\end{itemize}
The \emph{grid class} of a matrix $M$ is the set of all Cayley permutations which have an $M$-gridding. 
Vertical juxtapositions are grid classes where $M$ is a $1 \times 2$ matrix, in particular $V_{C, I}$ is the set of all Cayley permutations which have a gridding on $
  \big{(}
  \begin{smallmatrix}
    I\\
    C 
  \end{smallmatrix}
  \big{)}
$, $V_{C,D}$ is the set of all Cayley permutations which have a gridding on $
  \big{(}
  \begin{smallmatrix}
    D\\
    C 
  \end{smallmatrix}
  \big{)}
$, and $V_{C,C}$ is the set of all Cayley permutations which have a gridding on $
  \big{(}
  \begin{smallmatrix}
    C\\
    C 
  \end{smallmatrix}
  \big{)}
$.

For this paper, we will also define the grid classes
$\mathcal{G}_{A, B}$ where $M = \big{(} \begin{smallmatrix} 0 & B \\ I & A \end{smallmatrix} \big{)}$ for $A \in \{ I, D \}$ and $B \in \{ I, D, C \}$.
These six grid classes $\mathcal{G}_{I, I}$, $\mathcal{G}_{I, D}$, $\mathcal{G}_{I, C}$, $\mathcal{G}_{D, I}$, $\mathcal{G}_{D, D}$ and $\mathcal{G}_{D, C}$ as well as the grid classes $\mathcal{V}_{C, I}$, $\mathcal{V}_{C, D}$ and $\mathcal{V}_{C,C}$ are the nine classes that we will use to check when the vertical insertion encoding of an RGF class is regular. They are all shown in Figure~\ref{fig: grid classes}.

\begin{figure}[h]
  \centering
  \begin{subfigure}[b]{0.32\textwidth}
    \centering
    \resizebox{3cm}{!}{%
      \begin{tikzpicture} 
        \draw  (2.5,9.75) rectangle (7.5,4.75);
        \draw  (2.5,7.25) -- (7.5,7.25);
        \draw (5,9.75) -- (5,4.75);
        \draw  (3,5.25) -- (4.5,6.75);
        \draw  (5.5,7.75) -- (7,9.25);
        \draw (5.5,5.25) -- (7,6.75);
      \end{tikzpicture}
    }%
    \subcaption{Type $\mathcal{G}_{I,I}$.}
  \end{subfigure}
  \begin{subfigure}[b]{0.32\textwidth}
    \centering
    \resizebox{3cm}{!}{%
      \begin{tikzpicture}
        \draw  (2.5,9.75) rectangle (7.5,4.75);
        \draw  (2.5,7.25) -- (7.5,7.25);
        \draw  (5,9.75) -- (5,4.75);
        \draw  (3,5.25) -- (4.5,6.75);
        \draw (5.5,7.75) -- (7,9.25);
        \draw (5.5,6.75) -- (7,5.25);
      \end{tikzpicture}
    }%
    \subcaption{Type $\mathcal{G}_{D,I}$.}
  \end{subfigure}
  \begin{subfigure}[b]{0.16\textwidth}
    \centering
    \resizebox{1.5cm}{!}{%
      \begin{tikzpicture}
        \draw  (5,9.75) rectangle (7.5,4.75);
        \draw  (5,7.25) -- (7.5,7.25);
        \draw (5.5,7.75) -- (7,9.25);
        \draw (5.5,6) -- (7,6);
      \end{tikzpicture}
    }%
    \subcaption{Type  $\mathcal{V}_{C,I}$.}
    \label{fig: grid classes V CI}
  \end{subfigure}
  \begin{subfigure}[b]{0.32\textwidth}
    \centering
    \resizebox{3cm}{!}{%
      \begin{tikzpicture}
        \draw  (2.5,9.75) rectangle (7.5,4.75);
        \draw  (2.5,7.25) -- (7.5,7.25);
        \draw  (5,9.75) -- (5,4.75);
        \draw (5.5,8.5) -- (7,8.5);
        \draw  (3,5.25) -- (4.5,6.75);
        \draw (5.5,5.25) -- (7,6.75);
      \end{tikzpicture}
    }%
    \subcaption{Type $\mathcal{G}_{I,C}$.}
  \end{subfigure}
  \begin{subfigure}[b]{0.32\textwidth}
    \centering
    \resizebox{3cm}{!}{%
      \begin{tikzpicture}
        \draw  (2.5,9.75) rectangle (7.5,4.75);
        \draw (2.5,7.25) -- (7.5,7.25);
        \draw  (5,9.75) -- (5,4.75);
        \draw  (5.5,8.5) -- (7,8.5);
        \draw  (5.5,6.75) -- (7,5.25);
        \draw (3,5.25) -- (4.5,6.75);
      \end{tikzpicture}
    }%
    \subcaption{Type $\mathcal{G}_{D,C}$.}
  \end{subfigure}
  \begin{subfigure}[b]{0.16\textwidth}
    \centering
    \resizebox{1.5cm}{!}{%
      \begin{tikzpicture}
        \draw  (5,9.75) rectangle (7.5,4.75);
        \draw  (5,7.25) -- (7.5,7.25);
        \draw  (5.5,6) -- (7,6);
        \draw  (5.5,8.5) -- (7,8.5);
      \end{tikzpicture}
    }%
    \subcaption{Type $\mathcal{V}_{C,C}$.}
    \label{fig: grid classes V CC}
  \end{subfigure}
  \begin{subfigure}[b]{0.32\textwidth}
    \centering
    \resizebox{3cm}{!}{%
      \begin{tikzpicture} 
        \draw  (2.5,9.75) rectangle (7.5,4.75);
        \draw  (2.5,7.25) -- (7.5,7.25);
        \draw (5,9.75) -- (5,4.75);
        \draw  (3,5.25) -- (4.5,6.75);
        \draw (5.5,9.25) -- (7,7.75);
        \draw (5.5,5.25) -- (7,6.75);
      \end{tikzpicture}
    }%
    \subcaption{Type $\mathcal{G}_{I,D}$.}
  \end{subfigure}
  \begin{subfigure}[b]{0.32\textwidth}
    \centering
    \resizebox{3cm}{!}{%
      \begin{tikzpicture}
        \draw  (2.5,9.75) rectangle (7.5,4.75);
        \draw  (2.5,7.25) -- (7.5,7.25);
        \draw  (5,9.75) -- (5,4.75);
        \draw  (3,5.25) -- (4.5,6.75);
        \draw (5.5,9.25) -- (7,7.75);
        \draw (5.5,6.75) -- (7,5.25);
      \end{tikzpicture}
    }%
    \subcaption{Type $\mathcal{G}_{D,D}$.}
  \end{subfigure}
  \begin{subfigure}[b]{0.16\textwidth}
    \centering
    \resizebox{1.5cm}{!}{%
      \begin{tikzpicture}
        \draw  (5,9.75) rectangle (7.5,4.75);
        \draw  (5,7.25) -- (7.5,7.25);
        \draw  (5.5,6) -- (7,6);
        \draw (5.5,9.25) -- (7,7.75);
      \end{tikzpicture}
    }%
    \subcaption{Type $\mathcal{V}_{C,D}$.}
    \label{fig: grid classes V CD}
  \end{subfigure}
  \caption{Nine grid classes where $\diagup$ denotes increasing, $\diagdown$ denotes decreasing and $-$ denotes constant in that cell.}
  \label{fig: grid classes}
\end{figure}
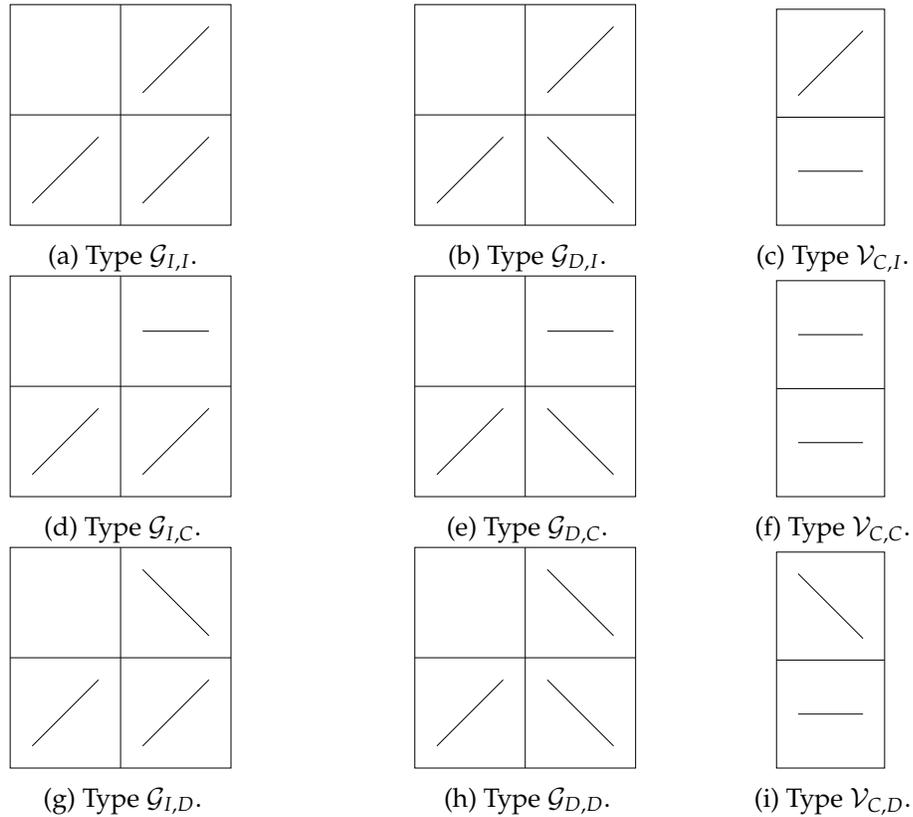

For $2 \times 2$ grid classes, we will call the set of points in each cell of a gridding a quadrant where $M_{1,1}$ is the $\alpha$ quadrant, $M_{2,1}$ is the $\beta$ quadrant, $M_{1,2}$ is the $\gamma$ quadrant and $M_{2,2}$ is the $\delta$ quadrant, hence  they are represented as $\big{(} \begin{smallmatrix} \gamma & \delta \\ \alpha & \beta \end{smallmatrix} \big{)}$.
For each grid class $\mathcal{G} \in \{ \mathcal{G}_{I,I}, \mathcal{G}_{I,C}, \mathcal{G}_{I,D}, \mathcal{G}_{D,I}, \mathcal{G}_{D,C}, \mathcal{G}_{D,D}  \}$, we define the \emph{$\mathcal{G}$-alternation} of size $3n$ to be the Cayley permutation in $\mathcal{G}$ of the form $123 \cdots n a_1 b_1 a_2 b_2 \cdots a_n b_n$ where $a_1 a_2 \cdots a_n$ has values $1$ to $n$ and the values in $b_1 b_2 \cdots b_n$ are strictly larger than $n$.
This has a $\mathcal{G}$-gridding where $a_1 a_2 \cdots a_n$ is in the $\beta$ quadrant, $b_1 b_2 \cdots b_n$ is in the $\delta$ quadrant and $12 3 \cdots n$ is in the $\alpha$ quadrant.
For example, the $\mathcal{G}_{I, I}$-alternation of size 9 is $123142536$, shown in Figure~\ref{fig: GII alternation} where the quadrants are represented by dashed lines and the points in the bottom left cell are in the $\alpha$ quadrant shown in red, points in the bottom right cell are in the $\beta$ quadrant shown in black and points in the top right cell are in the $\delta$ quadrant in blue.

\begin{figure}[h]
  \centering
  \resizebox{6cm}{!}{
    \begin{tikzpicture}
      \draw[step=1cm,gray,thin] (0.5,0.5) grid (9.5,6.5);
      \foreach \x/\y in {1/1, 2/2, 3/3}
      \fill[red] (\x,\y) circle (0.15cm);
      \foreach \x/\y in {4/1, 6/2,  8/3}
      \fill[black] (\x,\y) circle (0.15cm);
      \foreach \x/\y in {5/4, 7/5, 9/6}
      \fill[blue] (\x,\y) circle (0.15cm);
      \draw[dashed] (0.5, 3.5) -- (9.5, 3.5);
      \draw[dashed] (3.5, 0.5) -- (3.5, 6.5);
    \end{tikzpicture}
  }
  \caption{The $\mathcal{G}_{I,I}$-alternation of size $9$.}
  \label{fig: GII alternation}
\end{figure}
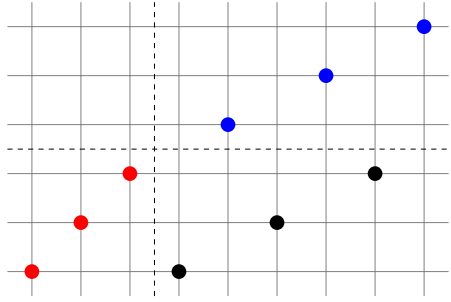

\begin{lemma}
  \label{lem: canon vertical alternation of size n, increasing}
  For each $\mathcal{G} \in  \{ \mathcal{G}_{I,I}, \mathcal{G}_{I,C}, \mathcal{G}_{I,D}, \mathcal{G}_{D,I}, \mathcal{G}_{D,C}, \mathcal{G}_{D,D} \}$, the $\mathcal{G}$-alternation of size $3n$ contains every Cayley permutation in $\mathcal{G}$ of size up to $n$.
\end{lemma}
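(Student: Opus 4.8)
The plan is to fix a $\mathcal{G}$-gridding of an arbitrary $\pi \in \mathcal{G}$ with $|\pi| = m \le n$ and to build an explicit order-preserving embedding of $\pi$ into the $\mathcal{G}$-alternation $\Pi = 12\cdots n\, a_1 b_1 a_2 b_2 \cdots a_n b_n$. Writing $\mathcal{G} = \mathcal{G}_{A,B}$, the gridding partitions the points of $\pi$ into the $\alpha$ quadrant (bottom-left, increasing), the $\beta$ quadrant (bottom-right, type $A$), and the $\delta$ quadrant (top-right, type $B$), with the $\gamma$ quadrant empty. Two observations organise the embedding: the \emph{small band} $\alpha \cup \beta$ of $\pi$ is a horizontal juxtaposition in $\mathcal{H}_{I,A}$, and the \emph{right part} $\beta \cup \delta$ is a vertical juxtaposition in $\mathcal{V}_{A,B}$. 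In $\Pi$ the prefix $12\cdots n$ is an increasing run of the $n$ smallest values, the $a_t$ form a type-$A$ sequence on the same value range, and the $b_t$ form a type-$B$ sequence on strictly larger values; moreover the prefix lies entirely to the left of the suffix, and every $a_t$ (and every prefix value) lies strictly below every $b_t$.

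First I would map the $\alpha$ points into the prefix, the $\beta$ points onto a chosen set of $a$-slots $t_1 < \cdots < t_q$, and the $\delta$ points onto chosen $b$-slots. Because $a$ is monotone of type $A$, assigning the $\beta$ points in index order to $a_{t_1}, \ldots, a_{t_q}$ reproduces the internal order of $\beta$; likewise assigning the $\delta$ points in index order to $b$-slots taken in increasing order reproduces the type-$B$ order of $\delta$ (when $B = C$ the $\delta$ quadrant is a single value and this is vacuous). The inter-quadrant relations that are \emph{not} automatic are exactly two: the value interleaving of $\alpha$ against $\beta$ inside the small band, and the index interleaving of $\delta$ against $\beta$ inside the right part. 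Every remaining relation holds for free, since $\alpha$ and the prefix sit to the left of the suffix, and since $\alpha,\beta$ map to values $\le n$ while $\delta$ maps to values $> n$, matching that $\alpha,\beta$ lie below $\delta$ in $\pi$.

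The heart of the argument, and the step I expect to be the main obstacle, is that these two remaining requirements compete for the \emph{same} resource, namely the choice of the slots $t_1 < \cdots < t_q$ used by $\beta$: each $\alpha$ point that must fall strictly between two consecutive $\beta$ values needs a spare prefix value between the corresponding $a$-values (forcing those $a$-slots apart), while each $\delta$ point that must fall strictly between two consecutive $\beta$ indices needs a spare $b$-slot between the corresponding $a$-slots (again forcing them apart). The key point is that a single gap $t_{j+1} - t_j$ simultaneously supplies $t_{j+1} - t_j - 1$ usable prefix values and $t_{j+1} - t_j$ usable $b$-slots, so I only need each gap to exceed the larger of its two local demands. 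I would then verify feasibility by a counting argument: summing the minimal gap requirements together with the room needed below $t_1$ and above $t_q$, the total is bounded by the number of $\alpha$ points plus the number of right-part points, that is, by $m \le n$. Hence slots $1 \le t_1 < \cdots < t_q \le n$ meeting all demands exist, and filling in the $\alpha$ points at the designated prefix values and the $\delta$ points at the designated $b$-slots yields a subsequence of $\Pi$ that standardises to $\pi$.

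Finally I would note that the construction is uniform across the six classes: for $A = D$ the roles of index order and value order within $\beta$ are reversed, but the gap bookkeeping is identical, and the monotone matching of $\beta$ to the $a$-slots and of $\delta$ to the $b$-slots is precisely the within-part content underlying Lemma~\ref{lem: horizontal concatenation contains jux} and Lemma~\ref{rem: vertical alternation}. The only genuinely new ingredient beyond those two lemmas is the coordination of their slot choices, which the counting bound above provides.
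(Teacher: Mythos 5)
Your proposal is correct, but it proves the lemma by a genuinely different route than the paper. You embed $\pi$ directly into the size-$3n$ alternation by selecting $a$-slots $t_1 < \cdots < t_q$ for the $\beta$ points and noting that each gap $t_{j+1}-t_j$ simultaneously supplies $t_{j+1}-t_j-1$ spare prefix values and $t_{j+1}-t_j$ spare $b$-slots; your feasibility count is sound, since setting each gap to the larger of its two local demands gives a total of at most $q + |\alpha| + |\delta| = m \le n$, and your identification of the only two non-automatic inter-quadrant relations ($\alpha$-versus-$\beta$ values and $\delta$-versus-$\beta$ indices) is exactly right --- including, implicitly, that equal values across $\alpha$ and $\beta$ cost nothing because the prefix and the $a_t$'s share the value set $[n]$, a point worth stating explicitly since Cayley permutations permit such equalities. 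The paper argues in the opposite direction, completing $\pi$ to an alternation by \emph{adding} points in three stages: copy into the $\beta$ quadrant any $\alpha$-values it lacks, invoke Lemma~\ref{rem: vertical alternation} to perfect the interleaving of the $\beta$ and $\delta$ points, then copy into $\alpha$ the $\beta$-values it lacks, with stage-by-stage counting showing the result has size at most $3n$. The paper's completion argument is shorter because it delegates all interleaving to Lemma~\ref{rem: vertical alternation} and does no slot arithmetic, though it tacitly also needs that the $\mathcal{G}$-alternation of size $3n$ contains the smaller alternations produced; your embedding is self-contained, makes the origin of the $3n$ bound transparent ($n$ prefix values, $n$ $a$-slots and $n$ $b$-slots set against a total demand of at most $n$), and isolates the one genuinely new difficulty --- that the value-interleaving and index-interleaving constraints compete for the same gaps --- which the paper's sequential point-adding resolves only implicitly.
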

\begin{proof}
  Let $\pi$ be a size $n$ Cayley permutation with maximum value $m$ in $\mathcal{G}$ for some fixed $\mathcal{G} \in  \{  \mathcal{G}_{I,I}, \mathcal{G}_{I,C}, \mathcal{G}_{I,D}, \mathcal{G}_{D,I}, \mathcal{G}_{D,C}, \mathcal{G}_{D,D} \}$. We will show that the $\mathcal{G}$-alternation $\sigma$ of size $3n$ in $\mathcal{G}$ contains $\pi$ by showing that it is sufficient to add at most $2n$ values (not necessarily distinct) to $\pi$ to create a $\mathcal{G}$-alternation.
  
  As $\pi \in \mathcal{G}$, there exists a $\mathcal{G}$-gridding of $\pi$. In this $\mathcal{G}$-gridding, suppose there are $k$ many points in the $\alpha$ quadrant, $j$ many points in the $\beta$ quadrant and $l$ many points in the $\delta$ quadrant where $k + j + l = n$. If there are values in the $\alpha$ quadrant which are not in the $\beta$ quadrant, then add these to the $\beta$ quadrant so that the values in the $\beta$ quadrant remain in increasing or decreasing order to create a new Cayley permutation $\pi' \in \mathcal{G}$. This adds $a$ many values where $0 \leq a \leq k$ as there are $k$ points in the $\alpha$ quadrant, so $|\pi'| = n + a$.
  
  In the $\beta$ and $\delta$ quadrants of $\pi'$ there are $j + l + a$ many points which form a vertical juxtaposition. By Lemma~\ref{rem: vertical alternation}, these points are contained in a vertical alternation of size $2(j + l + a)$, so by adding at most $j + l + a$ many values to $\pi'$, we can create a Cayley permutation $\tau \in \mathcal{G}$ of size $n+2a+j+l$ such that the points in the $\beta$ quadrant and the points in the $\delta$ quadrant perfectly interleave.
  
  In $\tau$, there are $j + l + a$ many points in the $\beta$ quadrant of which at least $a$ many are also in the $\alpha$ quadrant. Add any values in the $\beta$ quadrant which are not in the $\alpha$ quadrant to the $\alpha$ quadrant so that they form an increasing sequence, creating the Cayley permutation $\tau' \in \mathcal{G}$ which is a $\mathcal{G}$-alternation. There are at most $j+l$ many points in the $\beta$ quadrant which are not in the $\alpha$ quadrant, so we have added at most $j + l$ many values to create $\tau'$. Therefore, the size of $\tau'$ is at most $n+2a+2j+2l$. As $a \leq k$ and $l + j + k = n$, $\tau'$ is a $\mathcal{G}$-alternation of size $3n$ or smaller which contains $\pi$.

  As $\pi$ was arbitrary, every Cayley permutation in $\mathcal{G}$ of size $n$ is contained in a $\mathcal{G}$-alternation of size $3n$ or smaller, so the $\mathcal{G}$-alternation of size $3n$ contains every Cayley permutation in $\mathcal{G}$ of size up to $n$.
\end{proof}

This allows us to prove Theorem~\ref{them: vert slot bounded}, the main theorem of this section, which relies on Lemma~\ref{lem: sequence n^4 implies strict sequence n}, a result which we have proved in a previous paper~\cite[Lemma 4.2]{Bean2025} but now give a tighter bound.

\begin{lemma}
  \label{lem: sequence n^4 implies strict sequence n}
  Any sequence of size $n^3$ contains a subsequence of size $n$ that is either strictly increasing, strictly decreasing or constant.
\end{lemma}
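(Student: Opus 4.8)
The plan is to attach to each position of the sequence a triple of statistics—one for each of the three permitted subsequence types—and then conclude by a counting argument. Write the sequence as $x_1 x_2 \cdots x_{n^3}$. For each index $i$, I would let $a_i$ be the length of the longest strictly increasing subsequence ending at position $i$, let $b_i$ be the length of the longest strictly decreasing subsequence ending at $i$, and let $c_i$ be the length of the longest constant subsequence ending at $i$ (equivalently, one more than the number of earlier positions whose value equals $x_i$). Each of these quantities is at least $1$. The goal is to force one of them to reach $n$ somewhere.

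The key step is to show that the assignment $i \mapsto (a_i, b_i, c_i)$ is injective. Take any two indices $i < j$ and compare the values $x_i$ and $x_j$. If $x_i < x_j$, then a longest strictly increasing subsequence ending at $i$ can be extended by $x_j$, so $a_j \geq a_i + 1$; if $x_i > x_j$, then the same reasoning gives $b_j \geq b_i + 1$; and if $x_i = x_j$, then $c_j \geq c_i + 1$. In each of the three mutually exhaustive cases the triples at $i$ and $j$ differ in at least one coordinate, so distinct positions always receive distinct triples. This is the only place where real argument is needed, and it is the step I would expect to be the crux, since the pigeonhole that follows is pure bookkeeping.

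To finish, I would argue by contradiction: suppose the sequence contains no strictly increasing, strictly decreasing, or constant subsequence of size $n$. Then $a_i, b_i, c_i \leq n-1$ for every $i$, so every triple lies in $\{1, \ldots, n-1\}^3$, a set of cardinality $(n-1)^3$. Since there are $n^3 > (n-1)^3$ positions, two of them must share a triple, contradicting injectivity. Hence some coordinate attains the value $n$, producing a strictly increasing, strictly decreasing, or constant subsequence of size $n$, as claimed. This framing also explains the sharpening from $n^4$ to $n^3$: using exactly one coordinate per allowed subsequence type makes the relevant counting set $(n-1)^3$, so $n^3$ positions already exceed its size.
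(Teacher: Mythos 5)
Your proof is correct, and it takes a genuinely different route from the paper. The paper argues by a case split: either some value occurs $n$ times (giving a constant subsequence), or else there are at least $n^2$ distinct values; taking the first occurrence of each distinct value and invoking the Erd\H{o}s--Szekeres theorem as a black box then yields a strictly increasing or strictly decreasing subsequence of size $n$. You instead generalise the classical pigeonhole \emph{proof} of Erd\H{o}s--Szekeres itself: attaching to each index the triple $(a_i,b_i,c_i)$ of longest strictly increasing, strictly decreasing, and constant subsequences ending there, proving injectivity of $i \mapsto (a_i,b_i,c_i)$ by the trichotomy $x_i < x_j$, $x_i > x_j$, $x_i = x_j$, and counting. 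Your argument is self-contained (no external citation needed) and in fact establishes the slightly stronger statement that any sequence of size $(n-1)^3 + 1$ suffices, whereas the paper's reduction is shorter on the page precisely because it delegates the monotone case to a known theorem that the paper cites anyway. Both are valid; yours trades a one-line invocation for a sharper, citation-free bound.
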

\begin{proof}
  Let $\pi$ be a sequence of size $n^3$. Either $\pi$ contains $n$ occurrences of the same value, in which case we are done, or there is no value in $\pi$ which occurs $n$ times. In this case, there must be at least $n^2$ distinct values in $\pi$. Taking $\sigma$ to be the first occurrences of each of these values in $\pi$, by the Erd\H{o}s-Szekeres Theorem~\cite{Erdos1935} $\sigma$ contains either a strictly increasing or strictly decreasing subsequence of size $n$.
\end{proof}

We now fully classify when RGF classes are slot-bounded and therefore have a regular vertical insertion encoding in Theorem~\ref{them: vert slot bounded}.
The nine grid classes described in Theorem~\ref{them: vert slot bounded} are all represented in Figure~\ref{fig: grid classes} so an RGF class has a regular vertical insertion encoding if and only if it avoids Cayley permutations griddable on each of these nine grid classes.

\begin{theorem}
  \label{them: vert slot bounded}
  The class $\mathcal{R}$ is a subclass of $\mathcal{SB}_V(k)$ for some $k$ if and only if $\mathcal{R}$ avoids Cayley permutations which are griddable on each of the grid classes $\mathcal{G}_{I,I}$, $\mathcal{G}_{I,C}$, $\mathcal{G}_{I,D}$, $\mathcal{G}_{D,I}$, $\mathcal{G}_{D,C}$, $\mathcal{G}_{D,D}$, $\mathcal{V}_{C,I}$, $\mathcal{V}_{C,C}$ and $\mathcal{V}_{C,D}$.
\end{theorem}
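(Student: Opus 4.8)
The plan is to prove both implications through Proposition~\ref{prop:basis-canonical-slot-bounded}, mirroring the structure of the horizontal result, Theorem~\ref{thm: canonical classes regular condition}. Write $\Pi_k$ for the avoiding set of $\SB(k)$ given by Proposition~\ref{prop:basis-canonical-slot-bounded}, that is, the derivations of the configurations $12\cdots n \diamond a_1 \diamond \cdots \diamond a_k \diamond$. The two directions then reduce to: (forward) each of the nine grid classes contains an element of $\Pi_k$, so a subclass of $\SB(k)$, which avoids all of $\Pi_k$, must avoid a griddable Cayley permutation from each class; and (backward) for $k$ large enough every element of $\Pi_k$ contains a Cayley permutation from one of the nine classes, so a class $\R$ that avoids a pattern from each grid class must avoid all of $\Pi_k$ and hence be contained in $\SB(k)$.

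For the forward direction I would check that each $\mathcal{G}$-alternation and each $\mathcal{V}$-alternation of sufficiently large size passes, at an appropriate stage of its evolution, through a configuration of the form $12\cdots n \diamond a_1 \diamond \cdots \diamond a_k \diamond$ with more than $k$ slots: the stage just before the largest values are inserted exhibits the increasing prefix, the repeated low values as the $a_i$, and a slot for each value still to be placed. Such an alternation therefore contains an element of $\Pi_k$, and since it lies in its grid class, any subclass of $\SB(k)$—which avoids all of $\Pi_k$—avoids a Cayley permutation griddable on that class.

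The backward direction is the substantive part. Fix $N$ to be the largest size among the nine patterns that $\R$ avoids (one per grid class), and take $k$ to be a fixed polynomial in $N$. Let $\sigma \in \Pi_k$ be arbitrary; by construction $\sigma$ consists of an increasing prefix $12\cdots n$, the low points $a_1,\dots,a_k$ (values at most $n$), and one filler per slot, the fillers all having value strictly greater than $n$ and interleaving the low points positionally. First I would apply Lemma~\ref{lem: sequence n^4 implies strict sequence n} to the fillers to extract a subsequence that is strictly increasing, strictly decreasing or constant, of type $t_B \in \{I,D,C\}$; this selects a set of slots. Picking one low point from each gap between consecutive selected slots and applying Lemma~\ref{lem: sequence n^4 implies strict sequence n} again yields a subsequence of low points of type $t_A \in \{I,D,C\}$ that still interleaves the surviving fillers. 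If $t_A \in \{I,D\}$ the selected low points have distinct values, so the matching points of the increasing prefix give an increasing quadrant whose values agree with the low points, and the three groups assemble into a $\mathcal{G}_{t_A,t_B}$-alternation; if $t_A = C$ the low points are constant and, together with the fillers strictly above them, form a $\mathcal{V}_{C,t_B}$-alternation, the prefix being discarded. Either way $\sigma$ contains an alternation of one of the nine types of size at least $3N$ or $2N$.

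To finish, Lemma~\ref{lem: canon vertical alternation of size n, increasing} (for the $\mathcal{G}$-alternations) and Lemma~\ref{rem: vertical alternation} (for the $\mathcal{V}$-alternations) guarantee that such an alternation contains every Cayley permutation of its class up to size $N$, in particular the pattern of that class avoided by $\R$. Thus every $\sigma \in \Pi_k$ contains a pattern avoided by $\R$, so no element of $\R$ contains any $\sigma$, giving $\R \subseteq \SB(k)$. The main obstacle I anticipate is this double extraction: I must coordinate the two applications of Lemma~\ref{lem: sequence n^4 implies strict sequence n} so that the monotone choices on the fillers and on the low points survive simultaneously and remain interleaved, and I must treat the constant case $t_A = C$ separately, since that is precisely where the $\mathcal{V}_{C,B}$ classes rather than the $\mathcal{G}_{A,B}$ classes are required. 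Bookkeeping the polynomial bound so that $k$ is large enough to force an alternation of size at least $N$ after both extractions is the only other point needing care.
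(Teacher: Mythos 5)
Your proposal takes essentially the same route as the paper's proof: both directions run through Proposition~\ref{prop:basis-canonical-slot-bounded}, the forward direction by exhibiting alternation-type derivations griddable on the nine classes, and the backward direction by fixing $k$ polynomial in the longest avoided pattern (the paper takes $k=n^9$), applying Lemma~\ref{lem: sequence n^4 implies strict sequence n} twice --- first to the fillers, then to interleaved low points, re-thinning the fillers to keep the interleaving --- and splitting into the constant case (handled by Lemma~\ref{rem: vertical alternation} and the $\mathcal{V}_{C,\cdot}$ classes) and the monotone case (handled by Lemma~\ref{lem: canon vertical alternation of size n, increasing} and the $\mathcal{G}_{\cdot,\cdot}$ classes). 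The coordination and bookkeeping you flag as the main obstacles are resolved in the paper exactly as you outline, so your plan is sound.
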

\begin{proof}
  Suppose that $\mathcal{R} \subseteq \mathcal{SB}_V(k)$, then it avoids derivations of configurations of the form 
  \begin{equation*}
    1~2~\cdots~n~\diamond~a_1~\diamond~a_2~\diamond~\cdots~\diamond~a_k~\diamond
  \end{equation*}
  from Proposition~\ref{prop:basis-canonical-slot-bounded}. Take $a_1a_2\cdots a_k$ to be the strictly increasing or strictly decreasing sequence of size $k$ and let $n$ be the maximum value in this sequence. Insert values into the slots so that they form a strictly increasing, strictly decreasing or constant sequence. These derivations are $\mathcal{G}$-alternations from each of the six grid classes $\mathcal{G}_{I,I}$, $\mathcal{G}_{I,C}$, $\mathcal{G}_{I,D}$, $\mathcal{G}_{D,I}$, $\mathcal{G}_{D,C}$ and $\mathcal{G}_{D,D}$. 
  Similarly, take $a_1a_2\cdots a_k$ to be $1^k$ and $n$ to be 1. Insert values into the slots so that they form a strictly increasing, strictly decreasing or constant sequence. These derivations are vertical alternations from the classes$\mathcal{V}_{C,I}$, $\mathcal{V}_{C,C}$ and $\mathcal{V}_{C,D}$. As each of these nine sequences are not in $\mathcal{R}$, $\mathcal{R}$ must avoid Cayley permutations griddable on each of these nine grid classes.
  
  Now, suppose $\mathcal{R}$ is a class which avoids Cayley permutations griddable on each of the nine classes. Suppose that the largest Cayley permutation griddable on one of the nine classes which is avoided by $\mathcal{R}$ is of size $n$ for some fixed $n$. We will show that there exists some integer $k$ such that every Cayley permutation in the avoiding set $\Pi$ of $\mathcal{SB}_V(k)$ defined in Proposition~\ref{prop:basis-canonical-slot-bounded} contains a Cayley permutation that is avoided by $\mathcal{R}$ and which is griddable on one of the nine grid classes, so $\mathcal{R} \subseteq \mathcal{SB}_V(k)$ for this $k$.
  
  Take $k$ to be $n^{9}$ and $\sigma$ be any element of $\Pi$, writing $\sigma$ as $$12...c_mb_1a_1b_2a_2b_3...b_ka_kb_{k+1}$$ where $c_m$ is the maximum value of all the $a_i$'s. 
  There exists a $2 \times 2$ gridding of $\sigma$ where the $\alpha$ quadrant contains the sequence $12 \ldots c_m$, the $\beta$ quadrant contains the sequence $a_1 a_2 \ldots a_k$, the $\gamma$ quadrant is empty and the $\delta$ quadrant contains the sequence $b_1 b_2 \ldots b_{k+1}$.
  In this gridding, there are $k +1= n^{9}+1$ many points in the $\delta$ quadrant, so by Lemma~\ref{lem: sequence n^4 implies strict sequence n} there exists a subsequence of size $n^3$ which is strictly increasing, strictly decreasing or constant. We will denote this sequence as $b_{i_1} b_{i_2} \ldots b_{i_{n^3}}$.
  
  Next, we take a subsequence of points from the $\beta$ quadrant of size $n^3$ that perfectly alternate with the subset of points from the $\delta$ quadrant so that they form the pattern $a_{i_1}b_{i_1}a_{i_2}b_{i_2}\ldots a_{i_{n^3}}b_{i_{n^3}}$. Among these $n^3$ points from the $\beta$ quadrant there exists a subsequence of size $n$ which is strictly increasing, strictly decreasing or constant by Lemma~\ref{lem: sequence n^4 implies strict sequence n}. We will denote this sequence $a_{j_1} a_{j_2} \ldots a_{j_n}$.
  Taking these and only $n$ of the points from the $\delta$ quadrant which perfectly interleave with them creates a pattern of size $2n$ of the form $a_{j_1}b_{j_1}a_{j_2}b_{j_2}\ldots a_{j_n}b_{j_n}$ across the $\beta$ and $\delta$ quadrants.
  
  Finally, let $m'$ be the maximum of the sequence $a_{j_1} a_{j_2} \ldots a_{j_n}$. As $c_m$ is the maximum value of all the points in the $\beta$ quadrant and $m' \leq c_m$, there exists a subset of the points in the $\alpha$ quadrant of size $m'$ which contain the same values as the sequence $a_{j_1} a_{j_2} \ldots a_{j_n}$ in the $\beta$ quadrant. We will denote this sequence $c_1 c_2 \ldots c_{m'}$.
  
  This creates the pattern $\pi = c_1 c_2 \ldots c_{m'}a_{j_1}b_{j_1}a_{j_2}b_{j_2}\ldots a_{j_n}b_{j_n}$ where the sequence $c_1 c_2 \ldots c_{m'}$ is in the $\alpha$ quadrant, $a_{j_1} a_{j_2} \ldots a_{j_n}$ is in the $\beta$ quadrant and $b_{j_1} b_{j_2} \ldots b_{j_n}$ is in the $\delta$ quadrant. If the sequence in the $\beta$ quadrant is constant then $\pi$ is a vertical alternation of size $2n+1$ in $\mathcal{V}_{C,I}$, $\mathcal{V}_{C,C}$ or $\mathcal{V}_{C,D}$, so by Lemma~\ref{rem: vertical alternation} it contains every vertical juxtaposition of the same type up to size $n$, hence every Cayley permutation griddable on $\mathcal{V}_{C,I}$, $\mathcal{V}_{C,C}$ or $\mathcal{V}_{C,D}$ of size up to $n$. Otherwise, $\pi$ is a $\mathcal{G}$-alternation of size $3n$ in $\mathcal{G}_{I,I}$, $\mathcal{G}_{I,C}$, $\mathcal{G}_{I,D}$, $\mathcal{G}_{D,I}$, $\mathcal{G}_{D,C}$ or $\mathcal{G}_{D,D}$ and by Lemma~\ref{lem: canon vertical alternation of size n, increasing} it contains every Cayley permutation griddable on $\mathcal{G}_{I,I}$, $\mathcal{G}_{I,C}$, $\mathcal{G}_{I,D}$, $\mathcal{G}_{D,I}$, $\mathcal{G}_{D,C}$ or $\mathcal{G}_{D,D}$ of size up to $n$. Either way, $\pi$ must contain a Cayley permutation that is avoided by $\mathcal{R}$ and is griddable on one of the nine grid classes. As this element $\sigma$ of $\Pi$ contains a Cayley permutation which is avoided by $\mathcal{R}$ and as $\sigma$ was arbitrary, every element of $\Pi$ contains a Cayley permutation which is avoided by $\mathcal{R}$. Therefore, $\mathcal{R} \subseteq \mathcal{SB}_V(k)$.
\end{proof}

\section{Concluding remarks}
\label{sec:conclusion}

In this paper, we have adapted the two methods of enumerating Cayley permutations via the horizontal and vertical insertion encoding to RGFs and RGFs of matchings. For both methods we have classified when the insertion encoding forms a regular language. 
An algorithm for generating the rational generating functions for the regular insertion encodings can be found on GitHub~\cite{Bean_cperms_ins_enc_2025}. We have also given a method of enumerating RGFs corresponding to matchings using the horizontal insertion encoding and classified when this insertion encoding forms a regular language.

Table~\ref{tab:results for size 3} shows the number of RGF classes which can be defined by avoiding a set of size 3 Cayley permutations which have a regular insertion encoding. Every class with 5 or more basis elements has a regular insertion encoding of some type. Of the 8191 RGF classes defined by avoiding a set of size 3 patterns, 8161 have a regular insertion encoding. By finding their generating functions, we found that these form 72 different Wilf-equivalent classes with largest asymptotic growth rate of $\frac{5+\sqrt{5}}{2}$. 


\begin{table}[h]
    \centering
    \begin{tabular}{|c|c|c|c|c|}
        \hline
        Size of basis & Number of classes & Regular vertical  & Regular horizontal  & Either \\
        &  & insertion encoding &insertion encoding &  \\
        \hline
        1	&	13	&	2	&	5	&	6	\\
        2	&	78	&	33	&	58	&	65	\\
        3	&	286	&	221	&	262	&	278	\\
        4	&	715	&	668	&	699	&	713	\\
        5	&	1287	&	1269	&	1281	&	1287	\\      
        \hline
    \end{tabular}
    \caption{The number of RGF classes defined by avoiding a set of size 3 patterns. In each column, the number of classes with a regular vertical insertion encoding and a regular horizontal insertion encoding are shown. The last column shows the number of classes with either a regular vertical or horizontal insertion encoding.}
    \label{tab:results for size 3}
\end{table}

Because we have used Cayley permutations to define the bases, some of these classes are the same sets of RGFs. By defining our bases as RGFs instead, these 8191 classes correspond to only 206 different sets of RGFs. Of these, 190 have a regular insertion encoding. For example, $\Av(1212,1213)$ is the representation of both $\Av(212,213,312)$ and $\Av(212, 213)$ as RGFs. Interestingly, for this case only $\Av(212,213,312)$ passes our check for a regular vertical insertion encoding whereas $\Av(212, 213)$ does not.

This leaves 16 classes which do not have a regular insertion encoding. Jelínek and Mansour \cite{Jelínek2008} enumerated 5 of these classes and Mansour and Shattuck enumerated another two in \cite{Mansour2011a} and \cite{Mansour2011c}. 
The RGF class avoiding 111 is in bijection with partial matchings, in particular, $\Av(111,1212)$ is the number of non-crossing partial matchings which by \cite[Remark 1.2]{Guo2025} are enumerated by the Motzkin numbers and $\Av(111,1221)$ is the number of non-nesting partial matchings, also enumerated by the Motzkin numbers.
Additionally, $\Av(12312)$ and $\Av(12321)$ represent 2-distant non-crossing partitions and 2-distant non-nesting partitions respectively, which have been enumerated by Drake and Kim \cite[Theorem 5.3]{Drake2009}.
This leaves only 5 classes which have not been enumerated. They are shown in Table~\ref{tab:remaining classes} with a representative class defined by avoiding size 3 patterns in the first column, their corresponding basis as defined by the avoidance of RGFs in the second column, and the first few terms of their enumeration in the final column. Each of these classes represent 2-distant non-crossing or non-nesting partitions with extra pattern avoidance.
\begin{table}[h]
    \centering
    \begin{tabular}{|c|c|c|}
    \hline
    Size 3 representative & Defined by RGFs & First 10 terms \\
     \hline
    $\Av(111,312)$ & $\Av(111,12312)$ & 1, 1, 2, 4, 10, 25, 68, 187, 534, 1544, 4554\\
     \hline
    $\Av(111,321)$ & $\Av(111,12321)$ & 1, 1, 2, 4, 10, 25, 68, 187, 534, 1544, 4554\\
     \hline
    $\Av(211,312)$ & $\Av(1211,12312)$ & 1, 1, 2, 5, 14, 42, 132, 429, 1430, 4862, 16796\\
     \hline
    $\Av(211,321)$ & $\Av(1211,12321)$ & 1, 1, 2, 5, 14, 42, 132, 429, 1430, 4862, 16796\\
     \hline
    $\Av(213,312)$ & $\Av(1213,12312)$ & 1, 1, 2, 5, 14, 40, 113, 314, 860, 2329, 6254\\
    \hline
    \end{tabular}
    \caption{Sets of RGFs defined by avoiding size 3 Cayley permutations with unknown enumeration.}
    \label{tab:remaining classes}
\end{table}



In the original paper on the insertion encoding of permutations by Albert, Linton, and Ru\v{s}kuc~\cite{Albert2005}, the authors also classify when the insertion encoding forms a context-free language. This was not discussed in this paper as it does not lead to a method of automatically computing the algebraic generating functions but could be a direction for further research for both RGFs and RGFs of matchings.

Françon and Viennot~\cite{Françon1979} use the same core ideas as the insertion encoding to enumerate permutation classes and find statistics on them.
By using similar ideas as in this paper, the insertion encoding could be used to find statistics on RGFs and RGFs of matchings. Please see Campbell, Dahlberg, Dorward, Gerhard, Grubb, Pureell and Sagan~\cite{Campbell2016}, for example, for more information on RGF statistics.

There are many other techniques from permutation patterns which could be adapted to RGFs. For example, ideas from simple permutations~\cite{Albert2005b,Bassino2015}, the rationality of grid classes~\cite{Albert2013}, or enumerating RGF classes which avoid non-classical patterns~\cite{Baxter2011,Pudwell2010,Brändén2011,Ulfarsson2010}.

\bibliographystyle{apalike}
\bibliography{bibliography.bib}

\end{document}